\documentclass[a4paper]{amsart}

\usepackage{amsmath}%
\usepackage{amsfonts}%
\usepackage{amssymb}%
\usepackage{graphicx}
\usepackage{tikz}
\usepackage{enumerate}
\usepackage{rotating}
\usepackage{soul}
\usetikzlibrary{shapes}

\newtheorem{theorem}{Theorem}[section]
\newtheorem{corollary}[theorem]{Corollary}
\newtheorem{lemma}[theorem]{Lemma}
\newtheorem{proposition}[theorem]{Proposition}
\theoremstyle{definition}
\newtheorem{de}[theorem]{Definition}
\newtheorem{example}[theorem]{Example}

\numberwithin{equation}{section}

\def\<#1>{\langle#1\rangle}

\newcommand{\B}{\ensuremath{\mathcal B}}

\begin{document}
\title[The decomposition theorems in Baer $*$-rings]{The decomposition theorems in Baer $*$-rings}
\author{Zbigniew Burdak, Marek Kosiek, Patryk Pagacz, Marek S\l{}oci\'nski}

\address{Zbigniew Burdak, Department of Applied Mathematics, University of
Agriculture, ul. Balicka 253c, 30-198 Krak\' ow,
Poland.}

\email{rmburdak@cyf-kr.edu.pl}

\address{Marek Kosiek, Wydzia\l{} Matematyki i Informatyki,
Uniwersytet Jagiello\'nski, ul. Prof. St. \L{}ojasiewicza 6, 30-348 Krak\'ow, Poland
}\email{Marek.Kosiek@im.uj.edu.pl}

\address{Patryk Pagacz, Wydzia\l{} Matematyki i Informatyki,
Uniwersytet Jagiello\'nski, ul. Prof. St. \L{}ojasiewicza 6, 30-348 Krak\'ow, Poland
 }\email{Patryk.Pagacz@im.uj.edu.pl}

\address{Marek S\l{}oci\'nski, Wydzia\l{} Matematyki i Informatyki,
Uniwersytet Jagiello\'nski, ul. Prof. St. \L{}ojasiewicza 6, 30-348 Krak\'ow, Poland
 }\email{Marek.Slocinski@im.uj.edu.pl}

\keywords{Baer $*$ rings, Wold decomposition, canonical decomposition}


\thanks{Research was supported by the Ministry of Science and Higher Education of the
Republic of Poland}

\begin{abstract}
We show a general decomposition theorem in Baer $*$-rings. As a consequence the vast majority of decompositions known in the algebra of bounded Hilbert space operators are generalized to Baer $*$-rings. There are also results which are new in the algebra of bounded Hilbert space operators. The model of summands in Wold-S\l oci\' nski decomposition in Baer $*$-rings is given.
\end{abstract}
\maketitle
\section{Preliminaries}


In the recent papers \cite{Bagheri-Bardi2018, BB2} the authors noticed the important role of an algebraic structure in several results on decompositions in the algebra of bounded linear operators on Hilbert spaces. As a consequence they manage to generalize those results to Baer $*$-rings. We show a general decomposition theorem which yields the vast majority of decompositions on hereditary properties. In particular for the algebra of bounded Hilbert space operators they imply known decompositions as well as some new. Since our proof is purely algebraic, the results are formulated in Baer $*$-rings.

Let $R$ be a $*$-ring with unity $1$ and $\tilde{R}\subset R$ denotes the set of all projections (self-adjoint idempotents). Further $S^r:=\{x\in R:sx=0 \text{ for all } s\in S\}$ and similarly defined $S^l$ are the right, the left anihilator of $S\subset R$. Recall that $R$ is a Rickart $*$-ring if a right anihilator of each element is a right principal ideal generated by a projection. The ring $R$ is called a Baer $*$-ring if this property extends on subsets. Then, since $R$ is a $*$-ring, also a left anihilator of each element is a left principal ideal generated by a projection. For a Rickart $*$-ring the set $\tilde{R}$ is a lattice, for a Baer $*$-ring the lattice $\tilde{R}$ is complete.
Since the projection generating anihilator of $x$ in Rickart $*$-ring is unique, we may denote $\{x\}^l=R(1-[x])$ where $[x]\in \tilde{R}$ is called the left projection of $x$. The projection $[x]$ is the minimal one satisfying $[x]x=x$ and $\{x\}^l=\{[x]\}^l$. It follows that $\{x\}^r=(1-[x^*])R$ and $[x^*]$ is the right projection of $x$ (the minimal one satisfying $x[x^*]=x$ and $\{x\}^r=\{[x^*]\}^r$).

For any $p\in\tilde{R}$ the set $pRp$ is a ring with unity $p$ and it is called a corner of $R$. If $R$ is a Rickart $*$-ring or a Baer $*$-ring then their corners are of the same type.
\begin{de}Let $x\in R$ and $p\in\tilde{R}.$
\begin{itemize}
\item A compression of $x$ to $p$ is $pxp\in pRp.$
\item An element $x$ is $p$ invariant if $(1-p)\in\{xp\}^l$. Then $xp(=pxp)$ is a compression of $x$ to $p.$
\item A projections $p$  decomposes $x$ between two summands $$x=xp+x(1-p)=pxp+(1-p)x(1-p)$$ if and only if  $x$ is $p$ and $1-p$ invariant.
\end{itemize}
All the above is defined for subset $S\subset R$ by means that the respective condition holds for any $x\in S$.\end{de}
Note that $p$ decomposes $S$ if and only if $px=xp$ for any $x\in S.$ In other words $p\in S'$ (the commutant of $S$).

More generally a set of pairwise orthogonal projections $\{p_i\}_1^n\subset\tilde{R}$ such that $\sum_{i=1}^np_i=1$ is a unity decomposition (factorization). In Baer $*$-rings a unity decomposition may be infinite where $\sum_{i=1}^\infty p_i:=\sup\{p_1+\dots+p_i:i\in\mathbb{Z}_+\}$. A sequence of pairwise orthogonal projections $\{p_i\}_{i=1}^n$  (possibly $n=\infty$ in Baer $*$-rings) decomposes $x$ if $\{p_i\}_{i=1}^n\subset\{x\}'$ and $x=\sum_{i=1}^n xp_i$ ($:=x\sup\{p_1+\dots+p_i: i\in\mathbb{Z}_+\}$ for $n=\infty$.) If a unity decomposition $\{p_i\}_1^n\subset Z(R)$ (centre of $R$) then any $x\in R$ may be decomposed as $x=x\sum_{i=1}^np_i=\sum_{i=1}^nxp_i$. Hence $R=\sum_{i=1}^np_iRp_i$ is a decomposition of $R$.
It is clear that only projections in the centre provide decompositions of $R$.

\section{Decomposition}


Since now on we assume $R$ to be a Baer $*$-ring. Recall from \cite[Theorem 20]{Kaplansky68} or \cite[Proposition 4.5]{Berberian72}:
\begin{theorem}\label{starcommutant}
A commutant of a $*$-subset of a Baer $*$-ring is a Baer $*$-subring with unambiguous sups and infs.
\end{theorem}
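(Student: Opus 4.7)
The plan is to verify in sequence that $S'$ is a $*$-subring of $R$, that the right annihilator of an arbitrary $T \subseteq S'$ is generated inside $S'$ by a projection of $S'$, and finally that sups and infs computed in $\tilde{S'}$ agree with those in $\tilde{R}$. Closure of $S'$ under $0,1,+,\cdot$ is routine; for closure under the involution, given $x \in S'$ and $s \in S$, the hypothesis $S^* = S$ gives $s^* \in S$, hence $xs^* = s^*x$, and taking adjoints yields $sx^* = x^*s$, so that $x^* \in S'$.

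For the Baer condition, take an arbitrary $T \subseteq S'$ and write the right annihilator in $R$ as $T^r = eR$ with $e \in \tilde{R}$; this projection exists because $R$ is a Baer $*$-ring. The main claim is that $e \in S'$, for then the right annihilator of $T$ inside $S'$ is $T^r \cap S' = eS'$, a principal right ideal of $S'$ generated by the projection $e$. For $s \in S$, $x \in T^r$, and $t \in T \subseteq S'$, I compute $t(sx) = (ts)x = (st)x = s(tx) = 0$, so $sT^r \subseteq T^r$; specialising to $x = e$ gives $se = ese$. The key step is to promote this one-sided relation to full commutation. Applying the same identity to $s^* \in S$ (using the $*$-closure of $S$) yields $s^*e = es^*e$, and taking adjoints gives $es = ese$; combining with $se = ese$ produces $se = es$, so $e \in S'$ as required.

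For the unambiguity of sups and infs, given a family $\{p_i\} \subseteq \tilde{S'}$, one has in $R$ the identity $\sup_R p_i = 1 - e$, where $eR = \{p_i\}^r$ is the right annihilator taken in $R$; by the previous paragraph this $e$ already lies in $S'$, hence so does $\sup_R p_i$, which is then readily seen to serve also as the supremum in $\tilde{S'}$. Infima follow by complementation. The principal obstacle throughout is the commutation step in the Baer part: it is precisely there that the hypothesis $S = S^*$ becomes indispensable, since without it the relation $se = ese$ cannot in general be upgraded to $se = es$.
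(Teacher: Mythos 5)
Your proof is correct: the commutation trick $se=ese$ together with the same identity for $s^*$ is exactly the right way to place the annihilator projection $e$ in $S'$, and the identification $\sup_R p_i = 1-e$ with $\{p_i\}^r = eR$ correctly settles unambiguity of sups (and infs by complementation). Note that the paper does not prove this statement at all but simply recalls it from Kaplansky and Berberian; your argument is essentially the standard one given in those references.
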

In the introduction we defined $\sum_{i=0}^\infty xp_i:=x\sum_{i=0}^\infty p_i$ for $\{p_i\}$ pairwise orthogonal projections. However, if $xp_i$ are projections then they are pairwise orthogonal and the left hand sum makes sense on its own. Hence, we need to check that the definition causes no ambiguity. It follows by Corollary \ref{infsum}\textit{(1)} provided we check that if $xp_i$ are projections then $x$ is a projection commuting with all $p_i$-s. Indeed, $(xp_i)^2=xp_i=p_ix^*$ yields $xp_i=p_ixp_i$ and $x^2p_i=xp_i$. Hence, $\{p_i\}_{i\ge 0}\subset\{x^2-x\}^r=\{[(x^2-x)^*]\}^r$ and so $1=\sup\{p_i\}\le 1-[(x^2-x)^*]$ implies $x^2=x$. Similarly one can check that $x^*=x$.
\begin{corollary}\label{infsum}Infinite sums admit the following properties:
\begin{enumerate}
\item If a projection $p$ commute with pairwise orthogonal projections $\{p_i\}_{i\ge 1}$ then $p$ commute with $\sum_{i=1}^\infty p_i$ and $p\sum_{i=1}^\infty p_i=\sum_{i=1}^\infty pp_i.$
\item If $p_iq_j=0$ for any $i,j\in\mathbb{Z}_+$ then $(\sum_{i=1}^\infty p_i)( \sum_{j=1}^\infty q_j)=0$ where $\{p_i\}_{i\ge 1},$ $\{q_j\}_{i\ge 1}$ are sets of pairwise orthogonal projections.
\item If $\{p_{(i,j)}\}_{(i,j)\in\mathbb{Z}^2_+}$ is a set of pairwise orthogonal projections then $$\sum_{(i,j)\in\mathbb{Z}^2_+}p_{(i,j)}=\sum_{i=1}^\infty\sum_{j=1}^\infty p_{(i,j)}.$$

\end{enumerate}
 \end{corollary}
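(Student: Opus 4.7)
\textit{Plan.} I would prove the three parts in order, using Theorem \ref{starcommutant} as the main structural input for (1) and then bootstrapping.

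\textbf{Part (1).} Since $p$ commutes with each $p_i$, the partial sums $s_n := p_1+\dots+p_n$ lie in $\{p\}'$. By Theorem \ref{starcommutant}, $\{p\}'$ is a Baer $*$-subring with unambiguous sups, so $s := \sup_n s_n = \sum_{i=1}^\infty p_i$ lies in $\{p\}'$, i.e., $p$ commutes with $\sum_{i=1}^\infty p_i$. For the identity $ps = \sum_{i=1}^\infty pp_i$, set $t := \sup_n(pp_1+\dots+pp_n)$ (which exists because the $pp_i$ are pairwise orthogonal projections, being products of commuting orthogonal projections). The inequality $t\le ps$ is immediate since each $pp_i\le ps$ (as $p_i\le s$ and $p$ commutes with everything in sight). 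For $ps\le t$, decompose $p_i = pp_i+(1-p)p_i$ into orthogonal projections, so $p_i \le pp_i+(1-p) \le t+(1-p)$; taking sup yields $s\le t+(1-p)$, and multiplying by $p$ gives $ps\le pt = t$ (since $t\le p$).

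\textbf{Part (2).} Because $p_i q_j = 0$, also $q_j p_i = (p_i q_j)^* = 0$, hence each $p_i$ commutes with every $q_j$. Applying (1) with $p=p_i$ and the family $\{q_j\}$, we get that $p_i$ commutes with $T := \sum_{j=1}^\infty q_j$, and $p_i T = \sum_j p_i q_j = 0$. Thus $T$ commutes with every $p_i$; applying (1) once more, $T$ commutes with $S := \sum_{i=1}^\infty p_i$ and $TS = \sum_i T p_i = 0$.

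\textbf{Part (3).} For each fixed $i$, the $p_{(i,j)}$ are pairwise orthogonal, so $r_i := \sum_{j=1}^\infty p_{(i,j)}$ is defined. By (2) applied to the families $\{p_{(i,j)}\}_j$ and $\{p_{(i',j)}\}_j$ for $i\ne i'$, the projections $r_i$ are pairwise orthogonal, so $R' := \sum_{i=1}^\infty r_i$ is defined. Let $R := \sum_{(i,j)\in\mathbb Z_+^2} p_{(i,j)}$. Since $p_{(i,j)}\le r_i\le R'$ for all $(i,j)$, we get $R\le R'$; conversely $r_i = \sup_j p_{(i,j)}\le R$ for each $i$, so $R'=\sup_i r_i\le R$.

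\textbf{Main obstacle.} The only genuinely non-routine step is the distributivity $ps = \sum pp_i$ in (1); everything else is manipulation of sups of pairwise orthogonal projections via Theorem \ref{starcommutant} and reduction to (1). The trick of writing $p_i = pp_i + (1-p)p_i$ and bounding $s$ above by $t + (1-p)$ avoids any appeal to deeper lattice-distributivity results and keeps the argument purely within the projection arithmetic of a Baer $*$-ring.
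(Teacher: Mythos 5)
Your proof is correct, and while part (3) and the commutativity claim in part (1) follow the paper essentially verbatim (Theorem \ref{starcommutant} plus a two-sided sup comparison), your treatment of the two remaining steps takes a genuinely different route. For the distributivity $p\sum p_i=\sum pp_i$, the paper works with the difference $p\sum_{i}p_i-\sum_i pp_i$ (which is not a projection), shows via the auxiliary observation that orthogonality to every $p_i$ forces orthogonality to $\sup_i p_i$ that this difference annihilates $\sum_i p_i$, and then multiplies to conclude it vanishes. You instead never leave the projection lattice: the splitting $p_i=pp_i+(1-p)p_i$ gives $p_i\le t+(1-p)$, hence $s\le t+(1-p)$, and multiplying by $p$ yields $ps\le t$; this is cleaner and avoids manipulating non-projection elements, at the small cost of invoking (as does the paper, implicitly) the fact that a projection majorizes a finite orthogonal sum iff it majorizes each summand, and that multiplication by a commuting projection preserves $\le$. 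For part (2) the paper applies its orthogonality observation directly twice, whereas you derive it as a formal consequence of part (1) (noting $q_jp_i=(p_iq_j)^*=0$ so everything commutes); both are fine, and yours has the mild advantage of reusing machinery already established rather than reaching back to the preliminary remark.
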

\begin{proof}
Recall that $\sum_{i=1}^\infty p_i=\sup\{p_1+\dots+p_i:i\in\mathbb{Z}_+\}$. It is a natural definition. However, a projection majorises a finite sum of pairwise orthogonal projections if and only if it majorises each of its summands. Hence $$\sup\{p_1+\dots+p_i:i\in\mathbb{Z}_+\}=\sup\{p_i:i\in\mathbb{Z}_+\}.$$
Let us point out that \begin{equation}\tag{*}\label{gwiazdka}\text{if }pp_i=0\text{ for }i\ge 1\text{ then }p\sup\{p_i:i\ge 1\}=0.\end{equation}
Indeed, $pp_i=0$ yields $1-p\ge p_i$ for $i\ge 1$. Hence, $1-p\ge\sup\{p_i:i\ge 1\}$ and so $p\sup\{p_i:i\ge 1\}=0$.

Let us show $(1)$. The commutativity we get by Theorem \ref{starcommutant}. It is clear that $pp_i\le p_i\le \sum_{i=1}^\infty p_{i}$ and $pp_i\le p$ yields
$\sum_{i=1}^\infty pp_i\le p\sum_{i=1}^\infty p_i.$ Note that $$p\sum_{i=1}^\infty p_i-\sum_{i=1}^\infty pp_i=pp_{i_0}+p\sum_{\substack{i=1,\\ i\ne i_0}}^\infty p_i-pp_{i_0}-\sum_{\substack{i=1,\\ i\ne i_0}}^\infty pp_i=p\sum_{\substack{i=1,\\ i\ne i_0}}^\infty p_i-\sum_{\substack{i=1,\\ i\ne i_0}}^\infty pp_i.$$ Hence, by (\ref{gwiazdka}) $p\sum_{i=1}^\infty p_i-\sum_{i=1}^\infty pp_i$ is orthogonal to $p_{i_0}$. Since $i_0$ was arbitrary $p\sum_{i=1}^\infty p_i-\sum_{i=0}^\infty pp_i$ is orthogonal to any $p_i$ and by (\ref{gwiazdka}) to $\sum_{i=1}^\infty p_i$. Hence $$0=p\sum_{i=1}^\infty p_i\left(p\sum_{i=1}^\infty p_i-\sum_{i=1}^\infty pp_i\right)=p\sum_{i=1}^\infty p_i-\sum_{i=1}^\infty pp_i,$$ which yields the equality.

For $(2)$
denote $p= \sum_{i=1}^\infty p_i,$ and $q=\sum_{j=1}^\infty q_j$.
 By (\ref{gwiazdka}) $q_jp=0$ for any $j\ge 0$ and hence, again by (\ref{gwiazdka}) $pq=0$.

For $(3)$ denote $p=\sum_{(i,j)\in\mathbb{Z}^2_+}p_{(i,j)}$ and $p_i=\sum_{j=1}^\infty p_{(i,j)}$.  By (2) $\{p_i\}_{i=1}^\infty$ are pairwise orthogonal and $\sum_{i=1}^\infty\sum_{j=1}^\infty p_{(i,j)}=\sum_{i=1}^\infty p_i$ is well defined. For any fixed $i$ the inequality $p\ge p_{(i,j)}$ for any $j$ yields $p\ge p_i$. Hence $p\ge \sum_{i=1}^\infty p_i$. On the other hand $p_{(i,j)}\le p_i\le \sum_{i=1}^\infty p_i$ yields the reverse inequality.

\end{proof}
If $x,y$ commute then not necessarily $[x]$ commute with $y$. Indeed a non unitary isometry $y$  commutes with itself but $[y]y=y\ne y[y]$.
\begin{lemma}\label{cpm}
If $x\in\{y,y^*\}'$ then $[x]\in\{y,y^*,[y]\}'$.
 \end{lemma}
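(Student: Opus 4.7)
The plan is to exploit the characterization $\{x\}^l = R(1-[x])$ together with the observation that, whenever $x$ commutes with some element $z$, the left annihilator $\{x\}^l$ is stable under right multiplication by $z$: indeed $rx=0$ forces $(rz)x = r(zx) = r(xz) = 0$. So I will apply this principle twice --- first to $y, y^*$ to promote the commutativity from $x$ up to $[x]$, and then to $[x], [x]^*=[x]$ to promote it further to $[y]$.

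First I would show $[x]y = y[x]$ (and analogously $[x]y^* = y^*[x]$). By the principle above, $\{x\}^l$ is right-stable under multiplication by $y$. Since $1-[x]\in\{x\}^l = R(1-[x])$, we get $(1-[x])y = s(1-[x])$ for some $s\in R$, and multiplying on the right by $[x]$ gives $(1-[x])y[x]=0$, i.e.\ $y[x] = [x]y[x]$. The same reasoning with $y^*$ yields $y^*[x]=[x]y^*[x]$, whose adjoint is $[x]y = [x]y[x]$. Combining the two equalities gives $[x]y=y[x]$, and the symmetric argument gives $[x]y^* = y^*[x]$.

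Now I apply exactly the same mechanism in reverse. Since $[x]$ commutes with both $y$ and $y^*$, the annihilator $\{y\}^l$ is right-stable under multiplication by $[x]$. Using $1-[y]\in\{y\}^l = R(1-[y])$ one finds an $s'$ with $(1-[y])[x] = s'(1-[y])$, and multiplying by $[y]$ on the right gives $(1-[y])[x][y] = 0$, that is $[x][y] = [y][x][y]$. Since $[x]$ and $[y]$ are self-adjoint, taking adjoints gives $[y][x] = [y][x][y]$, and the two identities together yield $[x][y] = [y][x]$.

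The main conceptual step is the first one: recognizing that commutativity with $x$ is equivalent to right-$y$-stability of $\{x\}^l$, and then using $\{x\}^l = R(1-[x])$ to pass to $[x]$. Once that trick is established it is applied twice; no subtle use of Baer $*$-ring structure beyond the quoted description of annihilators is needed, so I expect no real obstacle --- only the need to perform the adjoint manoeuvre carefully so as not to confuse $[y]$ with $[y^*]$.
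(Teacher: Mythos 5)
Your proof is correct and follows essentially the same route as the paper: both arguments use $\{x\}^l=R(1-[x])$ and the commutativity hypothesis to derive $y[x]=[x]y[x]$, then take adjoints via $y^*$ to upgrade this to $[x]y=y[x]$, and finally rerun the same mechanism with $(y,[x])$ in place of $(x,y)$ to get $[x][y]=[y][x]$. The only cosmetic difference is that you feed $1-[x]$ into the annihilator and right-multiply by $y$, whereas the paper computes $([x]y-y[x])x=0$ directly; these are interchangeable.
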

\begin{proof}
Since $y$ commutes with $x$ we get
 $$0=xy-yx=[x]xy-y[x]x=[x]yx-y[x]x=([x]y-y[x])x.$$ In other words $[x]y-y[x]\in\{x\}^l=R(1-[x])$ and so $$[x]y-y[x]=([x]y-y[x])(1-[x])=[x]y-[x]y[x].$$ Reducing $[x]y$ we get $y[x]=[x]y[x]$. Since $x$ commutes also with $y^*$, by similar arguments  $y^*[x]=[x]y^*[x]$. Hence $[x]y=(y^*[x])^*=([x]y^*[x])^*=[x]y[x]=y[x]$. Since $[x]$ is selfadjoint it commutes also with $y^*$.

 We have showed that if $x\in\{y,y^*\}'$ then $[x]\in\{y,y^*\}'$. Replacing $x$ by $y$ and $y$ by $[x]$ we get that if $y\in\{[x]\}'$ then $[y]\in\{[x]\}'$. Since we showed that $y\in\{[x]\}'$ in the first part  the proof is complete.\end{proof}
For the sake of completness let us show some expectable properties.
 \begin{corollary}\label{remcomm}
Let $p$ be a selfadjoint element commuting with an arbitrary $x$. Then $p$ commutes with $[x]$ and $x$ commutes with $[p]$ and $[x]$ commutes with $[p]$. Moreover, if $p$ is a projection then $[px]=p[x]$.
\end{corollary}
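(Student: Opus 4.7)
The plan is to apply Lemma \ref{cpm} twice, once in each direction, and then handle the ``moreover'' equality separately by a minimality argument. Note first that since $p=p^*$, commuting with $p$ is the same as commuting with both $p$ and $p^*$; and conversely, taking adjoints of $px=xp$ gives $x^*p=px^*$, so the hypothesis $p\in\{x,x^*\}'$ of Lemma \ref{cpm} is automatic.

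With this observation, the first application of Lemma \ref{cpm} (taking its $x$ to be our $p$ and its $y$ to be our $x$) yields $[p]\in\{x,x^*,[x]\}'$. This gives that $x$ commutes with $[p]$, and that $[x]$ commutes with $[p]$. Symmetrically, the second application (taking its $x$ to be our $x$ and its $y$ to be our $p$, with hypothesis $xp=px$ and $p=p^*$) yields $[x]\in\{p,p^*,[p]\}'$, which gives that $p$ commutes with $[x]$, and reconfirms that $[x]$ commutes with $[p]$. All three commutation claims follow.

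For the final equality $[px]=p[x]$ when $p$ is a projection: since $p$ and $[x]$ commute (by the first claim), their product $p[x]$ is a projection. Using $[x]x=x$ and the commutativity, one checks immediately that $(p[x])(px)=px$, so the minimality of $[px]$ yields $[px]\le p[x]$. For the reverse inequality, the identity $(1-[px])px=0$ means $(1-[px])p\in\{x\}^l=R(1-[x])$; multiplying by $[x]$ on the right kills the factor $(1-[x])$, giving $(1-[px])p[x]=0$, i.e.\ $[px]\cdot p[x]=p[x]$, which is exactly $p[x]\le [px]$. Hence $[px]=p[x]$.

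The only place where real care is needed is the ``moreover'' part: everything else is a mechanical invocation of Lemma \ref{cpm} once the selfadjointness of $p$ is used to verify its hypotheses. The minimality/annihilator computation for $[px]=p[x]$ is the main content, but it reduces to one line once the left-annihilator description $\{x\}^l=R(1-[x])$ is in hand.
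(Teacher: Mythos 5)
Your proof is correct and follows essentially the same route as the paper: the commutation claims come from two applications of Lemma \ref{cpm} (using selfadjointness of $p$ to verify its hypotheses), and the identity $[px]=p[x]$ rests on the same two facts the paper uses, namely that $p[x]$ is a projection fixing $px$ and the annihilator description $\{x\}^l=R(1-[x])$. The paper phrases the last step as equality of left annihilators $\{px\}^l=\{p[x]\}^l$ plus uniqueness of the generating projection, while you split it into the two inequalities $[px]\le p[x]$ and $p[x]\le[px]$; this is only a cosmetic difference.
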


 \begin{proof}
 The first part follows by Lemma \ref{cpm}.

  For the second part, note that $p[x]$ is a projection and $p[x]px=px$ since $p\in\{[x]\}'$. Moreover,  if $yp[x]=0$ then $0=yp[x]x=ypx$. Reversely, $0=ypx=yp[x]x$ yields $yp[x]\in\{x\}^l=R(1-[x])$ by which $yp[x]=yp[x](1-[x])=0$. Hence $\{px\}^l=\{p[x]\}^l=R(1-p[x])$ and so, by uniqueness of the left projection $p[x]=[px]$.
\end{proof}

Let us show the main result of the paper. Recall that an element $x$ completely does not have a property $\mathcal{P}$ if and only if for any projection $0\ne p\in\{x\}'$ the compression $px$ does not have the property $\mathcal{P}$.

\begin{theorem}\label{dec}
Let $\{F_i\}_{i\in I}$ be a family of functions $F_i:R^n\mapsto R$ ($n$ not necessarily finite) such that $F_i(q\textbf{x})=qF_i(\textbf{x})$ for any  $\textbf{x}=\{x_j\}_j\in R^n$ and any projection $q\in\textbf{x}'$ where $q\textbf{x}=\{qx_j\}_j$. We say that $\textbf{x}$ has the property $\mathcal{P}_I$ if $\textbf{x}\in\bigcap_{i\in I}\ker F_i$.

 There is a unique projection $p\in\textbf{x}'$ such that $p\textbf{x}$ has the property $\mathcal{P}_I$ and  $(1-p)\textbf{x}$ completely does not have the property $\mathcal{P}_I$.
\end{theorem}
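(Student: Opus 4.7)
The plan is to take $p$ to be the largest projection in the Baer $*$-subring $R_0 := \{\mathbf{x},\mathbf{x}^*\}'$ (which is Baer with unambiguous sups and infs by Theorem \ref{starcommutant}) for which $p\mathbf{x}$ has property $\mathcal{P}_I$, and to deduce both clauses from this maximality. Two easy observations frame the arena: any projection commuting with $\mathbf{x}$ already commutes with $\mathbf{x}^*$, so $\tilde R_0 = \mathbf{x}' \cap \tilde R$; and by Theorem \ref{starcommutant} the complete lattice $\tilde R_0$ inherits its suprema and infima unambiguously from $\tilde R$.

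For $q \in \tilde R_0$, the homogeneity $F_i(q\mathbf{x}) = qF_i(\mathbf{x})$ rewrites ``$q\mathbf{x}$ has $\mathcal{P}_I$'' as $qF_i(\mathbf{x}) = 0$ for every $i$, equivalently $q \le 1-[F_i(\mathbf{x})]$ in $\tilde R$. Hence $\mathcal Q := \{q \in \tilde R_0 : q\mathbf{x} \text{ has } \mathcal{P}_I\}$ is closed under arbitrary suprema: if $q_\alpha \le 1-[F_i(\mathbf{x})]$ for every $\alpha$ and every $i$, then the supremum taken in $\tilde R$ (which coincides with the one in $\tilde R_0$) does so as well. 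Set $p := \sup\mathcal Q \in \mathcal Q$; by construction $p \in \mathbf{x}'$ and $p\mathbf{x}$ has $\mathcal{P}_I$.

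To verify that $(1-p)\mathbf{x}$ completely does not have $\mathcal{P}_I$, suppose $q$ is a nonzero projection commuting with $(1-p)\mathbf{x}$ for which the compression $q(1-p)\mathbf{x}$ has $\mathcal{P}_I$. Two applications of homogeneity — first with $q$ on the tuple $(1-p)\mathbf{x}$, then with $1-p\in\mathbf{x}'$ on $\mathbf{x}$ — yield $q(1-p)F_i(\mathbf{x}) = 0$ for every $i$. The central step, and the one I expect to be the main obstacle, is to extract from this $q$ a \emph{nonzero} projection $q^\ast \in \tilde R_0$ with $q^\ast \le 1-p$ and $q^\ast F_i(\mathbf{x}) = 0$: the natural candidate is the left projection $[(1-p)q]$, which is automatically $\le 1-p$ by minimality, and Lemma \ref{cpm} together with Corollary \ref{remcomm} should place it in $R_0$ once one checks that $(1-p)q$ commutes with each $x_j$ and $x_j^*$. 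Given such a $q^\ast$, the projection $p + q^\ast$ lies in $\mathcal Q$ and strictly dominates $p$, contradicting the maximality of $p$ and forcing $q = 0$.

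Uniqueness is then immediate: if $p'$ is another projection meeting both clauses, then $p' \in \mathcal Q$ gives $p' \le p$, so $p - p'$ is a projection in $R_0$ dominated by $1 - p'$ with $(p-p')F_i(\mathbf{x}) = 0$, and applying ``completely does not have $\mathcal{P}_I$'' to $(1-p')\mathbf{x}$ with the witness $p - p'$ forces $p = p'$.
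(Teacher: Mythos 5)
Your construction is the paper's: the paper also defines $p$ as $\sup\{q\in\tilde R\cap\mathbf{x}' : q\le 1-[F_i(\mathbf{x})]\ \text{for all}\ i\}$, which is exactly your $\sup\mathcal Q$, and your uniqueness argument (pass to the projection $p-p'$ and feed it to the ``completely does not have'' clause for $(1-p')\mathbf{x}$) is verbatim the paper's. The only substantive issue is the step you flag as the expected obstacle and leave unproven. It does need to be said, but it is a one-line observation rather than an obstacle, and the machinery you propose for it ($[(1-p)q]$, Lemma \ref{cpm}, Corollary \ref{remcomm}) is not needed. The witness $q$ in the definition of ``completely does not have $\mathcal P_I$'' is a projection of the corner $(1-p)R(1-p)$ commuting with the compressed tuple, so $q=(1-p)q=q(1-p)$, i.e.\ $q\le 1-p$ and your candidate $[(1-p)q]$ is just $q$ itself. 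Since $p\in\mathbf{x}'$ one has $(1-p)x_jp=x_j(1-p)p=0$, hence $qx_j=q(1-p)x_j=q\,(1-p)x_j(1-p)=(1-p)x_j(1-p)\,q=x_jq$: for a projection of the corner, commuting with the compression $(1-p)\mathbf{x}$ is the same as commuting with $\mathbf{x}$. Thus $q^*:=q$ already lies in $\tilde R_0$, satisfies $q\le 1-p$ and $qF_i(\mathbf{x})=q(1-p)F_i(\mathbf{x})=0$, so $p+q\in\mathcal Q$ and maximality of $p$ forces $q=0$. (This is in effect how the paper argues: it takes the witness $r\in\tilde R\cap\mathbf{x}'$ lying in the corner, deduces $r\le 1-p$ and $r\le p$, and concludes $r=0$.)

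One small remark in your favour: to show that a projection $r\in\tilde R\cap\mathbf{x}'$ with $rF_i(\mathbf{x})=0$ for all $i$ satisfies $r\le p$, the paper takes a detour through the selfadjoint element $p+r$ and its left projection $[p+r]$, whereas your observation that such an $r$ belongs to $\mathcal Q$ (because $rF_i(\mathbf{x})=0$ is equivalent to $r\le 1-[F_i(\mathbf{x})]$) gives $r\le\sup\mathcal Q=p$ directly. So, once the paragraph above is inserted where you wrote ``the main obstacle,'' your proof is complete and follows essentially the same route as the paper's, in places slightly more cleanly.
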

\begin{proof}
Let $p=\sup\{q\in\tilde{R}\cap\textbf{x}', q\le 1-[F_i(\textbf{x})]\text{ for }i\in I\}$. Since projections are selfadjoint, $\tilde{R}\cap\textbf{x}'=\tilde{R}\cap(\textbf{x} \cup \textbf{x}^*)'$. Hence, by Theorem \ref{starcommutant}  $p\in(\textbf{x} \cup \textbf{x}^*)'.$ On the other hand $p\le 1-[F_i(\textbf{x})]$ yields $p\in\{[F_i(\textbf{x})]\}^l=\{F_i(\textbf{x})\}^l$ and so $F_i(p\textbf{x})=pF_i(\textbf{x})=0$ for all $i\in I$. In conclusion $p\textbf{x}$ has the property $\mathcal{P}_I$.

Let $r\in\textbf{x}'$ be a projection such that the compression $r\textbf{x}$ has the property $\mathcal{P}_I.$ Let us show that $r\le p$.
The element $p+r$ is not necessarily an idempotent, but it is selfadjoint, commutes with elements of $\textbf{x}$, and by the assumption, $$(p+r)F_i(\textbf{x})=pF_i(\textbf{x})+rF_i(\textbf{x})=F_i(p\textbf{x})+F_i(r\textbf{x})=0$$
for any $i\in I$. Hence $F_i(\textbf{x})\in\{p+r\}^r=\{[p+r]\}^r$ yields $[p+r]\in\{F_i(\textbf{x})\}^l=\{[F_i(\textbf{x})]\}^l$ for any $i\in I$. Consequently $[p+r]\le 1-[F_i(\textbf{x})]$. On the other hand, by Corollary \ref{remcomm}, $[p+r]\in\textbf{x}'$. Eventually, $[p+r]$ is in the set which  supremum is equal $p$, so $[p+r]\le p$. Hence $1-p\in\{[p+r]\}^r=\{p+r\}^r$ and consequently $0=(p+r)(1-p)=r(1-p)$ yields $r\le p$.

  If $r\in\tilde{R}\cap\mathbf{x}'$ is in the corner $(1-p)R(1-p)$ then $r=(1-p)r=r(1-p)$. Hence $r\le 1-p$. On the other hand, if $r\textbf{x}$ has the property $\mathcal{P}_I$ then, by the previous part of the proof, $r\le p$. In conclusion $r=0$ and so $(1-p)\textbf{x}$ completely does not have the property $\mathcal{P}_I$.

  For uniqueness of $p$ assume that $r$ is a projection that decomposes $\textbf{x}$ between objects having the property $\mathcal{P}_I$ and completely not having it. Since $r\textbf{x}$ has the property $\mathcal{P}_I$, by the previous part of the proof $r\le p$. Hence $p$ and $r$ commute, so $p(1-r)=p-r\le 1-r$ is a projection in $\textbf{x}'$ and $$F_i((p-r)\textbf{x})=(p-r)F_i(\textbf{x})=pF_i(\textbf{x})-rF_i(\textbf{x})=F_i(p\textbf{x})-F_i(r\textbf{x})=0$$ for any $i\in I$. In other words the compression of $(1-r)\textbf{x}$ given by $(p-r)$  have the property $\mathcal{P}_I$. However, by assumption on $r$ the only compression of $(1-r)\textbf{x}$ having the property $\mathcal{P}_I$ is the trivial one. In conclusion $p-r=0,$ so $p$ is unique.
\end{proof}
One can use only a subset $J\subset I$ in Theorem \ref{dec} and get the respective property $\mathcal{P}_J$ and the projection $p_J$. In the following Proposition \ref{manyprop} we show that projections $p_J$ corresponding to various sets $J$ commute to each other. By this commutativity we are able to get decompositions among more than two summands and so gain more detailed descriptions. In the next section we show several applications of this fact. Precisely we extend to Baer $*$-rings several  classical results in the algebra of bounded linear operators on a Hilbert space $\B(H)$. Moreover, we get some new results also in $\B(H)$.
\begin{proposition}\label{manyprop}
Suppose we have functions $F_i:R^n\mapsto R$ for $i\in I_1\cup I_2,$ $\textbf{x}\in R^n$ and projections $p_1, p_2$ decomposing $\textbf{x}$ with a correspondence to properties $\mathcal{P}_{I_1}, \mathcal{P}_{I_2}$, respectively, as in Theorem \ref{dec}.

Projections $p_1, p_2$ commute and their product is $p_{12}$ - the projection corresponding to the property $\mathcal{P}_{I_1\cup I_2}$, as in Theorem \ref{dec}.
\end{proposition}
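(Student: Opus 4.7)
My plan is to prove both assertions jointly by a three-step argument. First, I would establish the containments $p_{12}\le p_1$ and $p_{12}\le p_2$: since $p_{12}\in\textbf{x}'\cap\tilde{R}$ annihilates $F_i(\textbf{x})$ on the left for every $i\in I_1\cup I_2$, in particular for every $i\in I_1$, the projection $p_{12}$ lies in the family whose supremum defines $p_1$ in Theorem \ref{dec}. Hence $p_{12}\le p_1$, and the symmetric argument gives $p_{12}\le p_2$; consequently $p_{12}$ already commutes with both $p_1$ and $p_2$, and $p_1-p_{12}$, $p_2-p_{12}$ are again projections in $\textbf{x}'\cap\tilde{R}$.

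The key step, and the main obstacle, is proving that $p_1$ and $p_2$ themselves commute. My plan here is to adapt the uniqueness trick from the proof of Theorem \ref{dec}: consider the selfadjoint element $s:=p_1+p_2-p_{12}\in\textbf{x}'$, so that by Corollary \ref{remcomm} its left projection $[s]$ again lies in $\textbf{x}'\cap\tilde{R}$; tracking $sF_i(\textbf{x})$ separately for $i\in I_1$ and $i\in I_2$ and appealing to the maximality (and to the ``completely does not have the property'' characterization) of $p_1$ and $p_2$ should force the obstruction to commutativity to vanish. In parallel I would study the elements $up_2$ and $p_2u$ with $u:=p_1-p_{12}$: by Lemma \ref{cpm} their left projections $[up_2]$ and $[p_2u]$ lie in $\textbf{x}'\cap\tilde{R}$, and verifying that $[up_2]$ annihilates $F_i(\textbf{x})$ for every $i\in I_1\cup I_2$ would give $[up_2]\le p_{12}$, hence $up_2=0$ (equivalently $p_1p_2=p_{12}$); the symmetric argument yields $p_2p_1=p_{12}$, and the two projections commute.

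Once commutativity is in hand, $p_1p_2$ is a projection in $\textbf{x}'\cap\tilde{R}$. For $i\in I_1$, $(p_1p_2)F_i(\textbf{x})=p_2(p_1F_i(\textbf{x}))=0$ using commutativity, while for $i\in I_2$, $(p_1p_2)F_i(\textbf{x})=p_1(p_2F_i(\textbf{x}))=0$; maximality of $p_{12}$ in Theorem \ref{dec} then gives $p_1p_2\le p_{12}$, and the reverse inequality $p_{12}=p_{12}p_1p_2\le p_1p_2$ is immediate from Step 1, so $p_1p_2=p_{12}$. Only the commutativity step is genuinely nontrivial; the other two steps are routine consequences of the maximality characterization from Theorem \ref{dec}.
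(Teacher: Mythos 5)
Your Step 1 ($p_{12}\le p_1$, $p_{12}\le p_2$, hence $p_{12}$ commutes with both) and Step 3 (once commutativity is known, $p_1p_2=p_{12}$ by maximality) are correct and agree with the paper. The problem is that Step 2, which you yourself flag as the main obstacle, is never actually carried out: you offer two plans and neither closes as described. The first plan fails outright: for $s=p_1+p_2-p_{12}$ one has $sF_i(\textbf{x})=p_2F_i(\textbf{x})$ for $i\in I_1$ and $sF_i(\textbf{x})=p_1F_i(\textbf{x})$ for $i\in I_2$, neither of which vanishes in general, so the $[p+r]$ trick from Theorem \ref{dec} has nothing to bite on. The second plan looks at the right object, but the crucial claim --- that $[up_2]$ annihilates $F_j(\textbf{x})$ for $j\in I_2$, where $u=p_1-p_{12}$ --- is precisely where the difficulty sits, and it does not follow from $up_2F_j(\textbf{x})=u\cdot 0=0$. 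Left annihilation only tells you $F_j(\textbf{x})\in\{up_2\}^r=(1-[(up_2)^*])R=(1-[p_2u])R$, i.e.\ it controls the \emph{right} projection $[p_2u]$ of $up_2$ (which satisfies $[p_2u]\le p_2$, so this yields nothing new), not the \emph{left} projection $[up_2]$ (which is the one satisfying $[up_2]\le u$ that you need). To convert one into the other you must know that $F_j(\textbf{x})$ commutes with $u$ and $p_2$: then $0=up_2F_j(\textbf{x})=F_j(\textbf{x})up_2$ gives $F_j(\textbf{x})(1-[up_2])=F_j(\textbf{x})$, hence $[up_2]F_j(\textbf{x})=F_j(\textbf{x})[up_2]=0$, and your argument closes. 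That commutation holds for the $*$-polynomial-type functions appearing in all of the paper's applications, but it is not a consequence of the stated hypothesis $F_j(q\textbf{x})=qF_j(\textbf{x})$, so as written this step is a genuine gap rather than a routine verification.

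For comparison, the paper takes a different route: it shows that every nonzero projection $q\in\textbf{x}'$ with $q\le p_1(1-p_{12})$ must fail to annihilate some $F_{j_0}(\textbf{x})$ with $j_0\in I_2$ (otherwise $q$ would lie in the family defining $p_{12}$ while being orthogonal to $p_{12}$), concludes that $p_1(1-p_{12})\textbf{x}$ completely does not have $\mathcal{P}_{I_2}$, and from this passes to $p_1(1-p_{12})\le 1-p_2$. Be aware that this last passage is your crux in disguise: ``completely does not have $\mathcal{P}_{I_2}$'' directly yields only that the infimum $p_1(1-p_{12})\wedge p_2$ vanishes, and a zero infimum does not force a zero product for noncommuting projections. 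So whichever route you follow, you need to supply the commutation of $F_j(\textbf{x})$ with the relevant projections (or an equivalent substitute) to finish; simply asserting that the verification ``would give'' $[up_2]\le p_{12}$ leaves the essential point unproved.
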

\begin{proof}
It is clear that $p_{12}\le p_1$ and so $p_{1}(1-p_{12})$ is a well defined projection. Consider an arbitrary projection $q\in\textbf{x}'$ such that $q\le p_{1}(1-p_{12})$. Then $q\le 1-[F_i(\textbf{x})]$ for $i\in I_1$ and there is $j_0\in I_2$ such that $q[F_{j_0}(\textbf{x})]\ne 0$. Hence $F_{j_0}(q\textbf{x})\ne 0$. Indeed,  $0=F_{j_0}(q\textbf{x})=qF_{j_0}(\textbf{x})$ yields $q[F_{j_0}(\textbf{x})]=0$ which is not true.
Since $q$ was arbitrary, $p_{1}(1-p_{12})\textbf{x}$ completely does not have the property $\mathcal{P}_{I_2}$. Consequently $p_{1}(1-p_{12})\le 1-p_2$ and so $p_1p_2=p_2p_1=p_{12}.$
\end{proof}
\section{Applications}

In this section we derive several decompositions from Theorem \ref{dec}.
The condition $pF_i(\textbf{x})=F_i(p\textbf{x})$ makes $\mathcal{P}_I$ a hereditary property (i.e. if $\textbf{x}$ has the property $\mathcal{P}_I$ then any compression of $\textbf{x}$ given by a commuting projection has $\mathcal{P}_I$ as well). By hereditarity, $(1-p)\textbf{x}$ completely does not have the property $\mathcal{P}_I$ if and only if $p$ is the maximal (so unique) projection such that $p\textbf{x}$ has the property. Hence, some statements in the section claim the existence of the maximal projection which is equivalent to the existence of the corresponding decomposition. Let us give a little leeway that for non hereditary property the maximality does not imply the uniqueness of the corresponding decomposition -- there may exist different decompositions between a part having the property and the one completely not having it. The reason is that the maximality of the projection may be considered only as a maximal element of some chain without uniqueness. For example, the property of being a bilateral shift is non hereditary. There may exist different bilateral shift parts of the same unitary operator on a Hilbert space. We skip details since it requires Spectral Theorem and is far from the subject of the article.

In the section we recall or adopt from the the algebra of bounded linear operators on Hilbert spaces several properties of a Baer $*$-ring elements. Let us start with the basic ones. Recall that an element $x\in R$ is called normal, a partial isometry, an isometry, a unitary element if $xx^*=x^*x, xx^*x=x, x^*x=1, x^*x=xx^*=1$ respectively.
\begin{theorem}\label{C1}
For any $x$ in a Baer $*$-ring there are the maximal projections $p_n, p_p, p_i, p_u$ commuting with $x$ such that:
\begin{itemize}
\item $p_nx$ is normal,
\item $p_px$ is a partial isometry,
\item $p_ix$ is an isometry,
\item $p_ux$ is unitary.
\end{itemize}
\end{theorem}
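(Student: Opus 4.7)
The plan is to reduce each of the four statements to a single application of Theorem \ref{dec} by exhibiting, in each case, functions $F_i:R^n \to R$ whose kernels describe the desired property and which satisfy the homogeneity hypothesis $F_i(q\mathbf{x}) = qF_i(\mathbf{x})$ for every projection $q\in\mathbf{x}'$.

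For normality and the partial isometry property the natural choices work directly. I would take $\mathbf{x}=(x)$ with
\[
F_n(x) = xx^* - x^*x, \qquad F_p(x) = xx^*x - x.
\]
Since a projection $q$ commuting with $x$ automatically commutes with $x^*$ (by self-adjointness), any monomial in $qx$ and $(qx)^*=qx^*$ collects its $q$'s to the left and $q^k=q$ gives $F_n(qx)=qF_n(x)$ and $F_p(qx)=qF_p(x)$ immediately. The zero loci are exactly the normal elements and the partial isometries, so Theorem \ref{dec} produces $p_n$ and $p_p$.

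For isometry and unitarity the naive choice $F(x)=x^*x-1$ fails the homogeneity test on account of its constant term: $F(qx)=qx^*x-1$ while $qF(x)=qx^*x-q$ (indeed, any $F$ satisfying $F(qx)=qF(x)$ must have $F(0)=0$). The remedy is to promote the identity to an explicit auxiliary coordinate: take $\mathbf{x}=(x,1)\in R^2$ and
\[
F_{\mathrm{iso}}(x_1,x_2) = x_2 - x_1^*x_1 x_2, \qquad F_{\mathrm{co}}(x_1,x_2) = x_2 - x_1 x_1^* x_2.
\]
A short computation (collecting $q$'s as before) gives $F_{\mathrm{iso}}(qx_1,qx_2)=qF_{\mathrm{iso}}(x_1,x_2)$ and similarly for $F_{\mathrm{co}}$. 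Now $F_{\mathrm{iso}}(px,p)=0$ reads $px^*x=p$, that is $(px)^*(px)=p$, which means exactly that $px$ is an isometry in the corner $pRp$; analogously $F_{\mathrm{co}}(px,p)=0$ means $(px)(px)^*=p$. Applying Theorem \ref{dec} with the family $\{F_{\mathrm{iso}}\}$ produces $p_i$, and with $\{F_{\mathrm{iso}},F_{\mathrm{co}}\}$ (or by combining the two single-function outputs via Proposition \ref{manyprop}) produces $p_u$.

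The only genuine obstacle is the isometric/unitary case: one must recognise that the constant $-1$ is what breaks homogeneity, and that it can be replaced by an auxiliary tuple-coordinate whose role is to ``become $p$'' after compression and hence encode the corner interpretation of the defining equation inside a kernel statement in $R$. Once this auxiliary-variable device is in place, maximality and uniqueness of each of $p_n,p_p,p_i,p_u$ follow at once from Theorem \ref{dec} together with the hereditariness of the four properties.
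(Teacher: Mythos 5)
Your proposal is correct and follows essentially the same route as the paper: the authors also apply Theorem \ref{dec} with $F(x)=xx^*-x^*x$, $F(x)=x-xx^*x$, $F(x,1)=1-x^*x$, and the pair $F_1(x,1)=1-x^*x$, $F_2(x,1)=1-xx^*$, using exactly the same device of an auxiliary argument $1$ that becomes $p$ in the corner $pRp$. Your version merely spells out that auxiliary coordinate slightly more explicitly.
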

\begin{proof}
The result follows from Theorem \ref{dec} where:
\begin{itemize}
\item $F(x)=xx^*-x^*x$ for $p_n$,
\item $F(x)=x-xx^*x$ for $p_p$,
\item $F(x,1)=1-x^*x$ for $p_i$,
\item $F_1(x,1)=1-x^*x, F_2(x,1)=1-xx^*$ for $p_u$.
\end{itemize}
Let us comment on an extra argument $1$ which appeared in the last formulas. The compression $p\textbf{x}$ is considered in the corner $pRp$ where the unity is $p$. Consequently, whenever a unity plays any role in the formula, it is added as an extra argument, to be replaced by $p$ in the corresponding compression. Hence, the condition $pF(\textbf{x})=F(p\textbf{x})$ is satisfied.
\end{proof}
The next result is formulated for a general element of a Baer $*$-ring, but it can be viewed in the context of Halmos-Wallen-Foia\c s result on power partial isometries \cite[Theorem]{Halmos70}.
\begin{corollary}
For any $x$ in a Baer $*$-ring there is a unity decomposition $$p_u+p_{pi}+p_{pci}+p_r=1$$ such that $p_u,p_{pi},p_{pci},p_r\in\{x\}'$ and
\begin{itemize}
\item $p_ux$ is unitary,
\item $p_{pi}x$ is a pure isometry,
\item $p_{pci}x$ is a pure co-isometry,
\item $p_rx,p_rx^*$ are completely not isometric.
\end{itemize}
\end{corollary}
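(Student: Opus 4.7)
The plan is to invoke Theorem \ref{C1} in two ways and glue the resulting maximal projections together via Proposition \ref{manyprop}. Theorem \ref{C1} supplies the maximal projection $p_{i}\in\{x\}'$ for which $p_{i}x$ is an isometry and the maximal $p_{u}\in\{x\}'$ for which $p_{u}x$ is unitary. Applying Theorem \ref{dec} to the datum $F(x,1)=1-xx^{*}$ (equivalently, running Theorem \ref{C1} on $x^{*}$) yields a maximal $p_{ci}\in\{x\}'$ such that $p_{ci}x$ is a co-isometry. Since ``unitary'' is precisely the conjunction of ``isometry'' and ``co-isometry'', Proposition \ref{manyprop} tells me that $p_{i}$ and $p_{ci}$ commute and that $p_{i}p_{ci}=p_{u}$.

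Having this, I would set
\[
p_{pi}:=p_{i}(1-p_{ci}),\qquad p_{pci}:=(1-p_{i})p_{ci},\qquad p_{r}:=(1-p_{i})(1-p_{ci}).
\]
Commutativity and selfadjointness of $p_{i},p_{ci}$ make these four elements pairwise orthogonal projections in $\{x\}'$ summing to $1$ (a one-line expansion), with $p_{pi}=p_{i}-p_{u}$ and $p_{pci}=p_{ci}-p_{u}$.

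Four properties remain to be checked. That $p_{pi}x$ is an isometry and $p_{pci}x$ a co-isometry follows from $p_{pi}\le p_{i}$, $p_{pci}\le p_{ci}$ together with the defining identities $p_{i}x^{*}x=p_{i}$ and $p_{ci}xx^{*}=p_{ci}$. For purity of $p_{pi}x$: any nonzero projection $q\le p_{pi}$ commuting with $p_{pi}x$ automatically satisfies $qx=xq$ (since $qp_{pi}=q=p_{pi}q$ and $p_{pi}\in\{x\}'$), hence lies in $\{x\}'$; if $qx$ were unitary in $qRq$, the maximality of $p_{u}$ would force $q\le p_{u}$, but $qp_{u}=qp_{pi}p_{u}=0$, so $q=0$. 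Exchanging the roles of $x$ and $x^{*}$ handles $p_{pci}x$. For $p_{r}x$ completely non-isometric: any projection $q\le p_{r}$ in $\{x\}'$ with $qx$ isometric must satisfy $q\le p_{i}$ by maximality, hence $q\le p_{i}(1-p_{i})=0$; symmetrically for $p_{r}x^{*}$ via $p_{ci}$.

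The only delicate point — and the main (rather mild) obstacle — is the bookkeeping on commutants: a projection witnessing purity or complete non-isometricity of a compression $p\mathbf{x}$ must be shown to lie in $\{x\}'$, not merely in the commutant of the compressed element $p\mathbf{x}$. The identity $qp=q=pq$ for $q\le p$, combined with $p\in\{x\}'$, arranges this uniformly in each case and lets the universal maximality assertions of Theorem \ref{C1} apply.
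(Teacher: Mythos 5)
Your proposal is correct and follows essentially the same route as the paper: obtain $p_i,p_u$ from Theorem \ref{C1}, obtain $p_{ci}$ by applying the same theorem to $x^*$, use Proposition \ref{manyprop} to get $p_ip_{ci}=p_{ci}p_i=p_u$, and define the pure and residual parts from these (your $p_i(1-p_{ci})=p_i-p_u=p_i(1-p_u)$ is exactly the paper's $p_{pi}$, and likewise for the others). Your extra verification that a projection in the corner commuting with the compressed element lies in $\{x\}'$ is a sound elaboration of a point the paper leaves implicit.
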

\begin{proof}
Indeed, let $p_u, p_i$ be as in Theorem \ref{C1} for $x$ while $p_{ci}$ be an isometric projection calculated for $x^*$ in Theorem \ref{C1}. By Proposition \ref{manyprop} $p_ip_{ci}=p_{ci}p_i=p_u$. Hence $p_{pi}=p_i(1-p_u)$ and $p_{pci}=p_{ci}(1-p_u)$ are well defined and orthogonal to each other. It remains to define $p_r=1-p_u-p_{pi}-p_{pci}$ which, by orthogonality to $p_i$ and $p_{ci}$, compress $x$ and $x^*$ to completely non isometric elements.
\end{proof}
For isometries the result is finer, the last part is described as truncated shifts \cite{BB2, Halmos70}.

Theorem \ref{dec} may be successfully applied to pairs (more generally sets) of elements. It works well, nevertheless a property describes a relation between/among elements (f.e. commutativity) or characterizes elements (f.e. normality). The following result on double commutativity may be modified to a commutativity. Recall that a pair of elements $(x, y)$ doubly commute if $x\in\{y, y^*\}'$.
 \begin{theorem}\label{dcdec}
For any pair $(x,y)$ of arbitrary elements in $R$ there is a unique projection $p\in\{x,y\}'$ such that $(px, py)$ doubly commute and  $((1-p)x, (1-p)y)$ completely not doubly commute.
\end{theorem}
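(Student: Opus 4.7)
The plan is to realize double commutation as the simultaneous vanishing of two natural expressions and then apply Theorem \ref{dec}. Set $\textbf{x} = (x,y) \in R^2$ and define $F_1, F_2 \colon R^2 \to R$ by
$$F_1(x,y) = xy - yx, \qquad F_2(x,y) = xy^* - y^*x.$$
By the definition of double commutativity recalled immediately before the statement, the pair $(x,y)$ doubly commutes exactly when $F_1(x,y) = 0 = F_2(x,y)$, so double commutativity is precisely the property $\mathcal{P}_{\{1,2\}}$ in the sense of Theorem \ref{dec}.

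Next I would verify the covariance hypothesis $F_i(q\textbf{x}) = qF_i(\textbf{x})$ for every projection $q \in \{x,y\}'$. For $F_1$ this is routine: expanding $(qx)(qy) - (qy)(qx)$ and using $q^2 = q$ together with $qx = xq$ and $qy = yq$ gives $q(xy - yx)$. For $F_2$ the adjoint has to be handled, but the key point is that a projection is self-adjoint, so $q \in \{x,y\}'$ automatically forces $q \in \{x^*, y^*\}'$ (take adjoints of $qy = yq$ to get $y^*q = qy^*$). Then $(qy)^* = y^*q = qy^*$, and a direct computation yields
$$F_2(qx,qy) = qxy^*q - y^*qx = q(xy^* - y^*x) = qF_2(x,y),$$
where the second equality uses $q^2 = q$ and the commutation of $q$ with $x$ and $y^*$.

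With the hypotheses of Theorem \ref{dec} verified, it applied to $I = \{1,2\}$ produces a unique projection $p \in \{x,y\}'$ for which $(px, py)$ lies in $\ker F_1 \cap \ker F_2$, i.e. doubly commutes in the corner $pRp$, while $(1-p)(x,y)$ completely lacks the property $\mathcal{P}_{\{1,2\}}$, which is exactly the statement that $((1-p)x, (1-p)y)$ completely do not doubly commute. The only non-routine step is the covariance of $F_2$, and it is dispatched entirely by exploiting the self-adjointness of $q$ to transfer commutation from $y$ to $y^*$.
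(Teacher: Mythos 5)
Your proof is correct and follows exactly the paper's route: the paper also proves Theorem \ref{dcdec} by applying Theorem \ref{dec} to $F_1(x,y)=xy-yx$ and $F_2(x,y)=xy^*-y^*x$, merely leaving the covariance check implicit. Your verification that self-adjointness of $q$ transfers commutation to $y^*$ is a correct fleshing-out of that omitted step.
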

\begin{proof}
It follows from Theorem \ref{dec} for  $F_1(x,y)=xy-yx, F_2(x,y)=xy^*-y^*x$.
\end{proof}

Much wider class are compatible pairs. The concept of compatibility was introduced for isometries on Hilbert spaces by K. Hor\'ak; V. M\"uller in \cite{Muller89}. It naturally extends to general pairs of elements in Baer $*$-rings.
\begin{de}
A pair $(x,y)$ is compatible if $\{[x^m]: m\ge 1\}\subset\{[y^n]: n\ge 1\}'$.
\end{de}
The following corollary is obvious for isometries in $\B(H)$, while in Baer $*$-rings it follows by Lemma \ref{cpm}.
 \begin{corollary}\label{dcarcomp}
 Any doubly commuting pair is compatible.
 \end{corollary}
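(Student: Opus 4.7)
The plan is very short: the statement reduces directly to Lemma \ref{cpm} applied to the pair $(x^m, y^n)$, once we upgrade double commutativity from $(x,y)$ to $(x^m, y^n)$.

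First I would verify that double commutativity is inherited by powers. Since $x \in \{y, y^*\}'$, both $y$ and $y^*$ commute with $x$, hence also with $x^m$ for every $m \ge 1$. Taking adjoints, $y^n$ and $(y^n)^* = (y^*)^n$ commute with $x^m$ for all $m, n \ge 1$, so $x^m \in \{y^n, (y^n)^*\}'$. In other words, $(x^m, y^n)$ is again doubly commuting.

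Next I would invoke Lemma \ref{cpm} with the roles of $x$ and $y$ played by $x^m$ and $y^n$ respectively: the hypothesis $x^m \in \{y^n, (y^n)^*\}'$ yields
\[
[x^m] \in \{y^n,\, (y^n)^*,\, [y^n]\}'.
\]
In particular $[x^m]$ commutes with $[y^n]$. Since $m, n \ge 1$ were arbitrary, this gives exactly $\{[x^m] : m \ge 1\} \subset \{[y^n] : n \ge 1\}'$, which is the definition of compatibility of $(x,y)$.

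There is essentially no obstacle: the whole argument is a bookkeeping step whose content is entirely in Lemma \ref{cpm}. The only thing to be careful about is the trivial observation that taking powers preserves the double commutativity assumption, which is what lets us feed $(x^m, y^n)$ into the lemma.
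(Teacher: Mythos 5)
Your proof is correct and follows exactly the route the paper intends: the paper simply remarks that the corollary ``follows by Lemma \ref{cpm}'', and your argument---passing from $x\in\{y,y^*\}'$ to $x^m\in\{y^n,(y^n)^*\}'$ and then applying the lemma to conclude $[x^m]\in\{[y^n]\}'$---is precisely the omitted bookkeeping. Nothing to add.
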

 An example of compatible, completely non doubly commuting pair is $(x,x)$ where $x$ is a non unitary isometry. Another examples can be found in papers on operators on Hilbert spaces \cite{BKPS2017, BKS2015, Muller89}. In particular in \cite{BKS2015} there is given a precise model of a commuting, compatible pair.

The next result shows a decomposition of an arbitrary pair between a compatible pair and a completely non compatible pair.
\begin{theorem}\label{compdec}
For any $x,y\in R$ there is $p\in\{x,y\}'$ such that $(px, py)$ is a compatible pair and  $((1-p)x, (1-p)y)$ is completely non compatible.
\end{theorem}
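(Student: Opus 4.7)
The plan is to deduce this directly from Theorem \ref{dec} by choosing a family of functions that encodes compatibility. For each $(m,n)\in\mathbb{Z}_+\times\mathbb{Z}_+$ I would set $F_{m,n}(x,y):=[x^m][y^n]-[y^n][x^m]$. By definition, the pair $(x,y)$ is compatible precisely when every $F_{m,n}$ vanishes on it, so compatibility is exactly the property $\mathcal{P}_I$ associated with this family, indexed by $I=\mathbb{Z}_+\times\mathbb{Z}_+$.

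The only hypothesis of Theorem \ref{dec} that requires verification is the equivariance $F_{m,n}(qx,qy)=qF_{m,n}(x,y)$ for every projection $q\in\{x,y\}'$. Since $q$ is a selfadjoint projection commuting with $x$, it commutes with $x^m$ and with $(x^m)^*=(x^*)^m$; Lemma \ref{cpm} then yields $q\in\{[x^m]\}'$, and Corollary \ref{remcomm} gives $[(qx)^m]=[qx^m]=q[x^m]$. The analogous identities hold for $y$. Combining these,
$$F_{m,n}(qx,qy)=q[x^m]\cdot q[y^n]-q[y^n]\cdot q[x^m]=q^2\bigl([x^m][y^n]-[y^n][x^m]\bigr)=qF_{m,n}(x,y),$$
as required.

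Once equivariance is in hand, Theorem \ref{dec} delivers a unique projection $p\in\{x,y\}'$ such that $F_{m,n}(px,py)=0$ for every $m,n\ge 1$ and such that no nonzero projection $q\in\{x,y\}'$ with $q\le 1-p$ satisfies the same vanishing. The first statement is precisely the compatibility of $(px,py)$, the second is precisely the complete non-compatibility of $((1-p)x,(1-p)y)$, so this $p$ is the sought decomposing projection. The only non-routine step in the whole argument is the equivariance, which reduces to the identity $[qx^m]=q[x^m]$ and the commutativity of $q$ with $[x^m]$ and $[y^n]$; both facts are exactly what Lemma \ref{cpm} and Corollary \ref{remcomm} were designed to provide, so beyond this verification the statement is a direct specialization of Theorem \ref{dec}.
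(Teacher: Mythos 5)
Your proposal is correct and follows essentially the same route as the paper: the same family $F_{m,n}(x,y)=[x^m][y^n]-[y^n][x^m]$ indexed by $\mathbb{Z}_+^2$ is fed into Theorem \ref{dec}, with the equivariance $F_{m,n}(qx,qy)=qF_{m,n}(x,y)$ justified via Corollary \ref{remcomm} (and Lemma \ref{cpm}). You merely spell out the equivariance check that the paper leaves as a one-line citation.
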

\begin{proof}
It follows from Theorem \ref{dec} for $F_{m,n}(x,y)=[x^m][y^n]-[y^n][x^m]$ for $m,n\in\mathbb{Z}_+$ where $F_{m,n}(px,py)=pF_{m,n}(x,y)$ by Corollary \ref{remcomm}.
\end{proof}
As a conclusion of Corollary \ref{dcarcomp} and Theorems \ref{dcdec} and \ref{compdec} we get a decomposition of an arbitrary pair of commuting elements among three compressions.
\begin{theorem}\label{dec3}
For any pair $(x,y)$ of arbitrary elements in $R$ there are unique projections $p, q\in\{x,y\}'$ where $p\le q$ such that
\begin{itemize}
\item $(px, py)$ doubly commute,
\item $(q(1-p)x, q(1-p)y)$ are compatible, completely non doubly commuting,
\item $((1-q)x,(1-q)y)$ is completely non compatible.
\end{itemize}
\end{theorem}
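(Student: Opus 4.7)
My approach is to invoke Theorems \ref{dcdec} and \ref{compdec} in sequence and use Corollary \ref{dcarcomp} to linearly order the resulting projections. Let $p \in \{x,y\}'$ be the projection from Theorem \ref{dcdec} (doubly commuting versus completely non doubly commuting) and $q \in \{x,y\}'$ the projection from Theorem \ref{compdec} (compatible versus completely non compatible). The first step is to establish $p \le q$: since $(px, py)$ doubly commutes, Corollary \ref{dcarcomp} makes the pair compatible, so the maximality of $q$ as the projection whose compression is compatible (compatibility being hereditary, as remarked at the start of this section) forces $p \le q$. Proposition \ref{manyprop} gives the same conclusion more abstractly: the projection for the conjunction of the two properties is $pq$, and since doubly commuting pairs are already compatible this conjunction coincides with double commutativity, so $pq = p$.

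Granted $p \le q$, the outer summands $(px, py)$ and $((1-q)x, (1-q)y)$ have the desired properties by construction. The middle summand $(q(1-p)x, q(1-p)y)$ is compatible because $(qx, qy)$ is, and $q(1-p) \le q$ is a sub-projection of $q$ in $\{x, y\}'$, so hereditarity transfers the property. For completely non doubly commuting: a projection $r$ in the corner $q(1-p) R q(1-p)$ commuting with the compressed pair necessarily lies in $\{x, y\}'$ and produces a doubly commuting pair $(rx, ry)$ in the ambient ring; by maximality of $p$ in Theorem \ref{dcdec} this yields $r \le p$, and combined with $r \le 1-p$ forces $r = 0$.

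For uniqueness, suppose $(p', q')$ with $p' \le q'$ also satisfies the three bullets. Observe first that the third summand being completely non compatible implies it is completely non doubly commuting, via the contrapositive of Corollary \ref{dcarcomp}. Splitting an arbitrary projection $r \in \{x,y\}'$ with $r \le 1-p'$ and $(rx, ry)$ doubly commuting as $r = r q' + r(1-q')$, each piece is killed by one of the two "completely not" conditions, so $r = 0$ and $p' = p$ by uniqueness in Theorem \ref{dcdec}. Then one verifies that $(q'x, q'y)$ is compatible by noting (via Corollary \ref{remcomm}) that compatibility of the whole reduces to a single annihilation equation involving $q'$, which inherits additively from the corresponding equations on the two sub-parts $p'$ and $q'(1-p')$ via the identity $p' + q'(1-p') = q'$; uniqueness in Theorem \ref{compdec} then gives $q' = q$. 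The main obstacle I anticipate is not a deep one but rather careful bookkeeping in this last step, where the distinct roles of double commutativity and compatibility must be kept straight.
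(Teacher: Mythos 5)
Your overall route is the paper's own: Theorem \ref{dec3} is presented there as an immediate consequence of Corollary \ref{dcarcomp} and Theorems \ref{dcdec} and \ref{compdec}, with no written proof, and your existence argument (take $p$ from Theorem \ref{dcdec}, $q$ from Theorem \ref{compdec}, deduce $p\le q$ from Corollary \ref{dcarcomp} together with the maximality of $q$, then check the three bullets) is correct and fills in exactly what the paper leaves implicit. The middle-summand verification, and the observation that a projection in the corner $q(1-p)Rq(1-p)$ commuting with the compressed pair already lies in $\{x,y\}'$, are both fine.

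There is, however, a genuine gap in your uniqueness argument, in the step establishing $p'=p$. You split a doubly commuting projection $r\le 1-p'$ as $r=rq'+r(1-q')$ and treat the two pieces as compressions to be killed by the two ``completely not'' conditions. This presupposes that $r$ and $q'$ commute, so that $rq'$ and $r(1-q')$ are projections in $\{x,y\}'$; but $r$ and $q'$ are merely two projections in the commutant $\{x,y\}'$, and nothing forces them to commute with each other (projections in the commutant are self-adjoint, so they lie in the Baer $*$-subring $\{x,y,x^*,y^*\}'$ of Theorem \ref{starcommutant}, but distinct projections in a Baer $*$-ring need not commute). The repair is to reverse the order of your two uniqueness steps. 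First prove $q'=q$ exactly as you do at the end: $q'F_{m,n}(x,y)=p'F_{m,n}(x,y)+q'(1-p')F_{m,n}(x,y)=0$, since the first summand corresponds to a doubly commuting (hence, by Corollary \ref{dcarcomp}, compatible) pair and the second to a compatible pair by hypothesis; thus $(q'x,q'y)$ is compatible, $((1-q')x,(1-q')y)$ is completely non compatible, and uniqueness in Theorem \ref{compdec} gives $q'=q$. Then, for any projection $r\in\{x,y\}'$ with $r\le 1-p'$ and $(rx,ry)$ doubly commuting, Corollary \ref{dcarcomp} together with the maximality clause in the proof of Theorem \ref{dec} gives $r\le q=q'$; since $p'\le q'$, the projections $q'$ and $1-p'$ commute, so $r\le q'(1-p')$, and the second bullet for the pair $(p',q')$ forces $r=0$. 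Hence $((1-p')x,(1-p')y)$ is completely non doubly commuting and uniqueness in Theorem \ref{dcdec} yields $p'=p$, with no splitting of $r$ required.
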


A very rich class of examples of Baer $*$-rings are bounded operators on Hilbert spaces $\B(H)$. Theorem \ref{dcdec} in $\B(H)$ is known. However, to the authors knowledge, compatibility was defined only for isometries so far. Hence Theorem \ref{compdec} is new also in $\B(H)$.

The compatibility does not imply commutativity. We give an example. Recall that two projections are equivalent if there is a partial isometry having them as the left and the right projection.
 \begin{example}
Let $p_{i,j}$ be a set of pairwise orthogonal and equivalent projections for $i,j=1,2,3$ and $x_{i,j}, y_{i,j}$ be partial isometries for $i,j=1,2$ such that $[x_{i,j}^*]=[y_{i,j}^*]=p_{i,j},\, [x_{i,j}]=p_{i+1,j},\, [y_{i,j}]=p_{i,j+1}.$ Define $$x=\sum_{i,j=1,2}x_{i,j},\quad y=\sum_{i,j=1,2}y_{i,j}\quad .$$

Let us check that $x, y$ are compatible. Since $[x^*]=\sum_{i,j=1,2}[x_{i,j}^*]$ and similarly for $[x], [y^*], [y]$ we get $$[x^*]=[y^*]=\sum_{i,j=1,2}p_{i,j},\;[x]=\sum_{\substack{i=2,3\\j=1,2}}p_{i,j},\; [y]=\sum_{\substack{i=1,2\\j=2,3}}p_{i,j}.$$
By orthogonality of projections $p_{i,j}$ one may check that $x^2=\sum_{j=1,2}x_{2,j}x_{1,j}$ and $y^2=\sum_{i=1,2}y_{i,2}y_{i,1}$ and $x^n=y^n=0$ for $n\ge 3.$
Hence $[x^2]=p_{3,1}+p_{3,2},\; [y^2]=p_{1,3}+p_{2,3},\; [x^n]=[y^n]=0$ for $n\ge 3$ which yields compatibility.

Consider $y'=y_{1,1}+y_{1,2}+uy_{2,1}+y_{2,2}$ where $u\ne p_{2,2}$ is a partial isometry such that $[u]=[u^*]=p_{2,2}$ (in other words compression of $u$ to $p_{2,2}Rp_{2,2}$ is unitary, not unity). Note that $[y^n]=[y'^n]$ for any $n\ge 0$ so $x, y'$ are compatible as well as $x,y$ are. Let us show that at most one of pairs $x,y$ and $x,y'$ may commute. Indeed $yxp_{1,1}=y_{2,1}x_{1,1}p_{1,1}\ne uy_{2,1}x_{1,1}p_{1,1}=y'xp_{1,1}$ while $xyp_{1,1}=x_{1,2}y_{1,1}p_{1,1}=xy'p_{1,1}.$ Hence at most one of equalities $xy=yx, xy'=y'x$ may hold.
\end{example}

One may ask about a quaternary decomposition with respect to commutativity and compatibility as in Theorem \ref{dec4} below. The answer is affirmative, but not obvious even in $\B(H)$. It follows from Proposition \ref{manyprop}.

\begin{theorem}\label{dec4}
For any pair $(x,y)$ of arbitrary elements in $R$ there is a unique identity decomposition among $p_{11}, p_{1,0}, p_{01}, p_{00}\in\{x,y\}'$ such that
\begin{itemize}
\item $(p_{11}x, p_{11}y)$ commute and are compatible,
\item $(p_{10}x, p_{10}y)$ commute and are completely non compatible,
\item $(p_{10}x, p_{10}y)$ completely do not commute and are compatible,
\item $(p_{10}x, p_{10}y)$ completely do not commute and are completely non compatible.
\end{itemize}
\end{theorem}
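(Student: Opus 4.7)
The approach is to refine the two decompositions given separately by Theorems \ref{dec} and \ref{compdec} using Proposition \ref{manyprop}. First I would apply Theorem \ref{dec} with the single function $F(x,y)=xy-yx$ to obtain $p_c\in\{x,y\}'$ such that $p_c\mathbf{x}$ commutes and $(1-p_c)\mathbf{x}$ completely does not commute. Next I would apply Theorem \ref{compdec} to obtain $p_k\in\{x,y\}'$ with analogous behaviour for compatibility. Proposition \ref{manyprop} guarantees that $p_c$ and $p_k$ commute, so the four projections
\[ p_{11}=p_cp_k,\quad p_{10}=p_c(1-p_k),\quad p_{01}=(1-p_c)p_k,\quad p_{00}=(1-p_c)(1-p_k) \]
are pairwise orthogonal, belong to $\{x,y\}'$, and sum to $1$.

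For existence of the four claimed properties, each piece is dominated by one of $p_c$, $1-p_c$ and by one of $p_k$, $1-p_k$; the commuting and compatible assertions are inherited from $p_c$ and $p_k$, while the completely-non assertions follow from the analogous property of $1-p_c$ and $1-p_k$. For example, if $r\le p_{10}$ is in $\tilde R\cap\{x,y\}'$ with $r\mathbf{x}$ compatible, then $r\le 1-p_k$ and Theorem \ref{compdec} forces $r=0$; the checks for $p_{01}$ and $p_{00}$ are identical.

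Uniqueness is the delicate point. Given another such decomposition $\{p'_{ij}\}$, the key ingredient is a lemma: an orthogonal sum of projections in $\{x,y\}'$ whose $\mathbf{x}$-images are completely non commuting (respectively completely non compatible) again has completely non commuting (resp.\ completely non compatible) $\mathbf{x}$-image. To prove the commuting case, take $r\le q_1+q_2$ in $\tilde R\cap\{x,y\}'$ with $r(xy-yx)=0$; the selfadjoint element $q_1rq_1$ lies in $\{x,y,x^*,y^*\}'$, so by Lemma \ref{cpm} its left projection $[q_1rq_1]$ lies in $\{x,y\}'$ with $[q_1rq_1]\le q_1$, and a short annihilator computation (using that $xy-yx$ commutes with $q_1$ and $r$) gives $[q_1rq_1](xy-yx)=0$. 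Completely non commuting of $q_1\mathbf{x}$ then yields $q_1rq_1=0$, so $(q_1r)(q_1r)^*=q_1r\cdot rq_1=q_1rq_1=0$, and properness of the Baer $*$-ring $R$ (a consequence of the Rickart structure: $x^*x=0$ gives $[x]x=0$ and hence $x=0$) forces $q_1r=0$; symmetrically $q_2r=0$, so $r=(q_1+q_2)r=0$. The compatibility case is identical once Corollary \ref{remcomm} is invoked to see that the relevant $F_{m,n}(\mathbf x)=[x^m][y^n]-[y^n][x^m]$ still commutes with $q_1$ and $r$.

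Granted the lemma, $(p'_{11}+p'_{10})\mathbf{x}$ commutes by a direct computation and $(p'_{01}+p'_{00})\mathbf{x}$ completely does not commute by the lemma, so the uniqueness clause of Theorem \ref{dec} forces $p'_{11}+p'_{10}=p_c$; the analogous argument for compatibility yields $p'_{11}+p'_{01}=p_k$. Since $p_c$ and $p_k$ commute and the $p'_{ij}$ are pairwise orthogonal, $p'_{11}=(p'_{11}+p'_{10})(p'_{11}+p'_{01})=p_cp_k=p_{11}$, and the remaining projections follow by subtraction. The main obstacle I anticipate is precisely this sub-projection lemma: two projections in $\{x,y\}'$ need not commute when the commutant is non-abelian, so one cannot naively split $r$ along $q_1$ and $q_2$, and it is the interplay of Lemma \ref{cpm} with properness of $R$ that closes the argument.
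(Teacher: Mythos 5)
Your existence argument is exactly the paper's proof: the same two families $F_1(x,y)=xy-yx$ and $F_{m,n}(x,y)=[x^m][y^n]-[y^n][x^m]$, Proposition \ref{manyprop} to make the two projections commute, and the four products $p_cp_k$, $p_c(1-p_k)$, $(1-p_c)p_k$, $(1-p_c)(1-p_k)$. Where you go beyond the paper is uniqueness: the paper's proof stops after exhibiting the decomposition and never justifies the word ``unique'' in the statement, whereas you correctly identify that the missing step is showing that an orthogonal sum $q_1+q_2$ of projections in $\{x,y\}'$ whose compressions completely fail a property again completely fails it (so that $p'_{11}+p'_{10}$ and $p'_{11}+p'_{01}$ can be matched against $p_c$ and $p_k$ via the uniqueness clause of Theorem \ref{dec}). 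Your proof of that lemma is sound: $q_1rq_1$ is selfadjoint and lies in $\{x,y,x^*,y^*\}'$, so Lemma \ref{cpm} puts $[q_1rq_1]$ in $\{x,y\}'$ below $q_1$; the annihilator computation $q_1rq_1F=q_1rFq_1=0$ together with $\{z\}^l=R(1-[z])$ kills $[q_1rq_1]$, and properness of a Rickart $*$-ring upgrades $q_1rq_1=0$ to $q_1r=0$. This is a genuine (if modest) improvement on the paper, which implicitly relies on uniqueness of the binary decompositions without addressing the fact that a sub-projection of $p'_{01}+p'_{00}$ need not split along the two summands.
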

\begin{proof}
Let $I_1=\{1\}$ and $F_1(x,y)=xy-yx$ and $I_2=\mathbb{Z}_+^2$ and $F_{m,n}(x,y)=[x^m][y^n]-[y^n][x^m]$ as in the proof of Theorem \ref{compdec}. Obviously the corresponding properties $\mathcal{P}_{I_1}$ and $\mathcal{P}_{I_2}$ defined as in Theorem \ref{dec} are commutativity and compatibility, respectively. On the other hand, by Proposition \ref{manyprop} projections $p_{cm}, p_{cp}$ corresponding to $\mathcal{P}_{I_1}, \mathcal{P}_{I_2}$ commute. Hence $p_{11}=p_{cm}p_{cp}, p_{10}=p_{cm}(1-p_{cp}), p_{01}=(1-p_{cm})p_{cp}, p_{00}=(1-p_{cm})(1-p_{cp})$ provides the decomposition required in the statement.\end{proof}

 Let $\mathcal{P}$ be a property characterizing indyvidual elements (f.e. normality). Recall that a set $S$ completely does not have the property $\mathcal{P}$ (f.e. is completely not normal) if for any $0\ne p\in S'$ there is at least one $x\in S$ such that $pxp$ does not have the property (at least one $pxp$ is not normal).
We extend results of Theorem \ref{C1} on subsets. We show the decomposition with respect to normality. Other results may be proved similarly.
\begin{corollary}\label{normalfamily}
Let $S\subset R$ where $R$ is a Baer $*$ -ring. There is a maximal projection $p\in S'$ such that $pS$ is normal (a set of normal elements).
\end{corollary}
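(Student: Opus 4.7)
The plan is to realize $S$ itself as the argument of Theorem \ref{dec}. I would treat $S$ as a tuple $\textbf{x}=(x)_{x\in S}\in R^{|S|}$, so that $\textbf{x}'=S'$, and for each $x\in S$ introduce $F_x\colon R^{|S|}\to R$ given by $F_x(\textbf{y})=y_xy_x^*-y_x^*y_x$, i.e.\ the self-commutator of the coordinate indexed by $x$. The associated property $\mathcal{P}_S=\bigcap_{x\in S}\ker F_x$, when applied to $\textbf{x}$, is precisely ``every element of $S$ is normal.''

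The only hypothesis of Theorem \ref{dec} to check is the covariance $F_x(q\textbf{x})=qF_x(\textbf{x})$ for every projection $q\in\textbf{x}'$. Since such a $q$ is selfadjoint and commutes with $x$, it also commutes with $x^*$, so
$$(qx)(qx)^*-(qx)^*(qx)=qxx^*q-qx^*xq=q(xx^*-x^*x),$$
which is exactly what is required. Theorem \ref{dec} then supplies a unique $p\in S'$ such that every $px$ is normal, while $(1-p)S$ completely fails the property.

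Finally, I would invoke the opening paragraph of the present section, which notes that the covariance condition $F_x(q\textbf{x})=qF_x(\textbf{x})$ already makes $\mathcal{P}_S$ hereditary (any commuting subprojection of $p$ still compresses $S$ to a normal set), and that for hereditary properties the ``completely not'' clause of Theorem \ref{dec} is equivalent to the maximality of $p$. Hence $p$ is the maximal projection in $S'$ with $pS$ normal, as asserted.

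I do not expect any real obstacle: the one conceptual move is the packaging step, treating the (possibly very large) set $S$ as a single tuple $\textbf{x}$ so that the family $\{F_x\}_{x\in S}$ falls under the hypotheses of Theorem \ref{dec}. The only ring-theoretic input beyond that theorem is the trivial observation that a selfadjoint element commuting with $x$ automatically commutes with $x^*$.
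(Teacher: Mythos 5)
Your proof is correct and follows exactly the paper's route: the paper likewise defines, for each $s\in S$, the function $F_s:R^{S}\ni \textbf{x}\mapsto x_s^*x_s-x_sx_s^*$ and applies Theorem \ref{dec}. You have merely spelled out the covariance check and the hereditarity-implies-maximality remark that the paper leaves implicit.
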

\begin{proof}
It is enough to take $F_s:R^{S}\ni \textbf{x} \mapsto x_s^*x_s-x_sx_s^*$ for any $s\in S$ and apply Theorem \ref{dec}.
\end{proof}

Let us finish this section by a generalization of Wold, Helson-Lowdenslager, Suciu result \cite[Theorem 3]{Suciu68}. For those reason we extend the concept of a quasi-unitary semigroup of isometries to Baer $*$-rings.
\begin{de}
Let $G$ be an abelian group  and $S\subset G$ be a semigroup such that $S\cap S^{-1}=\{1\}$ and $SS^{-1}=G$.
Denote $\{x_s\}_S$ a semigroup of isometries in a Baer $*$-ring $R$ (i.e. $x_1=I, x_sx_r=x_{sr}$).

We call a semigroup $\{x_s\}_S$ quasi-unitary if $\sup\{[x_g^*x_s]: g^{-1}s\notin S^{-1}\}=1.$ A semigroup is purely quasi-unitary if it is quasi-unitary and completely non unitary.

A completely non quasi-unitary group is called strange.
\end{de}
\begin{theorem}\label{isogroup}
Let $\{x_s\}_S$ be a semigroup of isometries in a Baer $*$-ring $R$. There are projections $p_u,p_{qu},p_s\in\{x_s\}'_S$ such that $p_u+p_{qu}+p_s=1$ and
\begin{itemize}
\item $p_ux_s$  is unitary for every $s\in S$,
\item $\{p_{pqu}x_s\}_S$ is purely quasi-unitary,
\item $\{p_sx_s\}_S$ is strange.
\end{itemize}
\end{theorem}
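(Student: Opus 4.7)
The plan is to apply Theorem \ref{dec} twice---once to extract the unitary compression and once to extract the quasi-unitary one---and then to combine the resulting projections using Proposition \ref{manyprop} together with the fact that every unitary semigroup is quasi-unitary. For the unitary part, since each $x_s$ is already an isometry, the compression $p x_s$ in the corner $pRp$ is unitary precisely when $p\,x_s x_s^* = p$. Applying Theorem \ref{dec} to the family $F_s(\{x_t\}_S, 1) = 1 - x_s x_s^*$ indexed by $s\in S$ yields a unique maximal projection $p_u \in \{x_t\}'_S$ such that every $p_u x_s$ is unitary, while $(1-p_u)\{x_s\}_S$ is completely non-unitary.

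For the quasi-unitary part, set
\[
F(\{x_t\}_S, 1) \;=\; 1 - \sup\{[x_g^* x_s] : g^{-1}s \notin S^{-1}\}.
\]
Before invoking Theorem \ref{dec} one must verify $F(p\textbf{x}, p) = p\,F(\textbf{x},1)$ for every projection $p \in \{x_t\}'_S$. Since $p$ commutes with each $x_g^* x_s$ and with its adjoint, Corollary \ref{remcomm} gives $[p\,x_g^* x_s] = p\,[x_g^* x_s]$. Moreover, by Theorem \ref{starcommutant}, $\{p\}'$ is a Baer $*$-subring with unambiguous sups, and there one has $p \sup\{[x_g^* x_s]\} = \sup\{p\,[x_g^* x_s]\}$. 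Combining these yields the required compression identity, and Theorem \ref{dec} then produces a maximal projection $p_q \in \{x_t\}'_S$ such that $\{p_q x_s\}_S$ is quasi-unitary while $(1-p_q)\{x_s\}_S$ is strange.

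To assemble the three summands, observe first that a unitary semigroup is quasi-unitary: unitarity of each $x_s$ forces $x_g^* x_s$ to be unitary and hence $[x_g^* x_s] = 1$, so the supremum in question equals $1$. Invoking Proposition \ref{manyprop} with $I_1$ the index set defining unitarity and $I_2$ the singleton defining $F$, the combined property $\mathcal{P}_{I_1 \cup I_2}$ coincides with unitarity, so its governing projection is $p_u$; thus $p_u p_q = p_u$, i.e.\ $p_u \le p_q$, and the two projections commute. Define
\[
p_{qu} := p_q(1-p_u), \qquad p_s := 1 - p_q,
\]
so that $p_u + p_{qu} + p_s = 1$ and all three projections lie in $\{x_t\}'_S$. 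Then $p_{qu} x_s$ is quasi-unitary by the hereditarity built into Theorem \ref{dec} (it is a compression of the quasi-unitary $p_q\textbf{x}$ by the commuting projection $1-p_u$) and completely non-unitary because $p_{qu}\le 1-p_u$, while $p_s \textbf{x}$ is strange by the defining property of $p_q$.

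The principal obstacle is the compression identity for the sup-based function $F$: the individual left projections pull $p$ out by Corollary \ref{remcomm}, but the distributivity of a commuting projection over an arbitrary supremum of projections is the genuinely delicate point and relies on the complete lattice structure of $\{p\}'$ supplied by Theorem \ref{starcommutant}. Once that is established, the three-way decomposition follows by exactly the bookkeeping used in previous applications of Theorem \ref{dec} and Proposition \ref{manyprop} earlier in the paper.
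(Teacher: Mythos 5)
Your proof is correct and follows essentially the same route as the paper: the same function $F(\textbf{x})=1-\sup\{[x_g^*x_s]:g^{-1}s\notin S^{-1}\}$ and the same family $F_s(\textbf{x},1)=1-x_sx_s^*$ are fed into Theorem \ref{dec}, and the three summands are assembled from $p_u\le p_{qu}$ exactly as in the paper. The only difference is that you justify two steps the paper leaves implicit — the distributivity of $p$ over the supremum in the compression identity, and $p_u\le p_{qu}$ via Proposition \ref{manyprop} — which is a welcome tightening rather than a different argument.
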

\begin{proof}
Define $F(\textbf{x})=1-\sup \{[x_g^*x_s]: g^{-1}s\notin S^{-1}\}$ where $\textbf{x}\in R^S, \textbf{x}=\{x_s\}_S.$ Note that by Corollary \ref{remcomm}, $pF(\textbf{x})=F(p\textbf{x})$ for any projection $p\in \textbf{x}'$. Hence, by Theorem \ref{dec} we get a projection $p_{qu}$ which is the maximal one compressing the semigroup to a quasi-unitary semigroup. Similarly like in Corollary \ref{normalfamily} we consider a family of functions $F_s:R^{S}\ni \textbf{x} \mapsto 1-x_sx^*_s$ and get a projection $p_u$. It is clear that $p_u\le p_{qu}$ and so $p_{pqu}=p_{qu}(1-p_u)$ is a well defined projection compressing the semigroup to a purely quasi-unitary semigroup. Clearly $p_s=1-p_{qu}$ compress the semigroup to a strange semigroup.
\end{proof}

\section{Multiple canonical decomposition}

Consider a property of an individual element. Assume there is a pair $(x, y)$ such that each of its elements admits a decomposition between summand having the property and the one completely not having it. We may usually find also a decomposition of the pair $(x,y)$ between the pair having the property and the one completely not having it as in Corollary \ref{normalfamily} for example. However, the fact that the pair completely does not have the property does not say much about individual elements in the pair. Indeed, consider as an example the property of being normal. A normal element and a completely not normal element as well as two completely not normal elements form completely not normal pairs. Hence a pair completely not having some property requires a finer description. Wold, Helson-Lowdenslager, Suciu result recalled in the previous section is one of the first attempts of characterizations of this type. The best would be a quaternary decomposition, as defined:
\begin{de}
A canonical decomposition of a pair $(x,y)$ with respect to a property $\mathcal{P}$ characterizing single elements $x, y$ is a quaternary decomposition
$$p_{11}+p_{10}+p_{01}+p_{00}=1$$ where $p_{11}, p_{10}, p_{01}, p_{00}\in\{x,y\}'$ are such that
\begin{itemize}
\item each of $p_{11}x, p_{11}y$ has the property $\mathcal{P}$,
\item $p_{10}x$ has the property $\mathcal{P}$, $p_{10}y$ completely does not have the property $\mathcal{P}$,
\item $p_{01}x$ completely does not have the property $\mathcal{P}$, $p_{01}y$ has the property $\mathcal{P}$,
\item each of $p_{00}x, p_{00}y$ completely does not have the property $\mathcal{P}$.
\end{itemize}
\end{de}
Unfortunately, a general pair may not admit a canonical decomposition. Let us explain why Proposition \ref{manyprop} does not work for canonical decompositions.  Consider once again the property of being normal. By Proposition \ref{manyprop} projections $p_x, p_y$ corresponding (in the sense of Theorem \ref{dec}) to $F_x(x,y)=x^*x-xx^*, F_y(x,y)=y^*y-yy^*$ do commute. Hence $p_xp_y$ is a projection. It can be checked that it is a maximal projection where both compressions are normal. However, $p_x(1-p_y)$ compress $x$ to a normal element but $p_x(1-p_y)y$ is not necessarily a completely not normal element. Indeed, there may exist a projection $0\ne q\le p_x(1-p_y)$ commuting with $y$ where $qy$ in normal but $q$ does not commute with $x$. To be precise, in the decomposition of $x$ we consider projection corresponding to $F_x(x)=x^*x-xx^*$ instead of $F_x(x,y)=x^*x-xx^*$. The formula is the same. The difference is that the respective supremum is taken among projection commuting only with $x$ in the first case and with both $(x,y)$ in the second case. Hence the projection corresponding to $F_x(x)$ may majorize the one corresponding to $F_x(x,y)$. Let us formulate the result similar to \cite[Corollary (2.3)]{CatPtakSzym}.
\begin{proposition}\label{canequiv}
Let $F_i:R\mapsto R$ for $i\in I$ be a family of functions, where $R$ is a Baer $*$-ring. An element $r\in R$ is said to have the property $\mathcal{P}$ if $r\in\bigcap_{i\in I}\ker F_i$.  If $x,y\in R$ are such that $pF_i(x)=F_i(px),\; qF_i(y)=F_i(qy)$ for $i\in I$ and any projection $p\in\{x\}',\; q\in\{y\}'$, respectively then:
\begin{itemize}
\item there are maximal projections $p_x\in\{x\}'$ and $p_y\in\{y\}'$ such that $p_xx, p_yy$ have the property $\mathcal{P},$
\item there are maximal projections $q_x, q_y\in\{x,y\}'$ such that $q_xx, q_yy$ have the property $\mathcal{P},$
\item $q_x\le p_x,\; q_y\le p_y$.
\end{itemize}

Moreover, the following conditions are equivalent:
\begin{itemize}
\item $(x,y)$ admits a canonical decomposition with respect to the property $\mathcal{P}$,
\item $p_x, p_y\in\{x,y\}',$
\item $p_x=q_x, p_y=q_y.$
\end{itemize}
\end{proposition}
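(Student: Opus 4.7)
My plan is to establish the four maximal projections via Theorem~\ref{dec}, derive the basic inequalities $q_x\le p_x$ and $q_y\le p_y$, then prove the equivalences in the order (b)$\Leftrightarrow$(c), (c)$\Rightarrow$(a), (a)$\Rightarrow$(c). To obtain $p_x$ (resp.\ $p_y$) I apply Theorem~\ref{dec} to the single element $x$ (resp.\ $y$) with the family $\{F_i\}$; the hypothesis $pF_i(x)=F_i(px)$ for $p\in\{x\}'$ furnishes exactly the compatibility needed. For $q_x$ I instead apply Theorem~\ref{dec} to the pair $(x,y)\in R^2$ with $\tilde F_i(x,y):=F_i(x)$, whose compatibility follows because $\{x,y\}'\subset\{x\}'$; analogously for $q_y$. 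The inequalities $q_x\le p_x$, $q_y\le p_y$ are then immediate from the fact that the suprema defining $q_x$, $q_y$ range over sub-collections of those defining $p_x$, $p_y$.

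The equivalence (b)$\Leftrightarrow$(c) is straightforward: (c) directly gives $p_x=q_x\in\{x,y\}'$, while under (b) the projection $p_x$ itself is eligible in the supremum defining $q_x$, yielding $p_x\le q_x$, which combined with $q_x\le p_x$ produces equality. For (c)$\Rightarrow$(a) I apply Proposition~\ref{manyprop} to $(x,y)$ with the two single-component properties, ensuring $q_x$ and $q_y$ commute. Setting $p_{11}:=q_xq_y$, $p_{10}:=q_x(1-q_y)$, $p_{01}:=(1-q_x)q_y$, $p_{00}:=(1-q_x)(1-q_y)$ produces orthogonal projections in $\{x,y\}'$ summing to $1$, and I verify the four canonical-decomposition conditions directly. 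For instance, any hypothetical nonzero projection $q\le p_{10}$ in $\{y\}'$ with $qy$ having $\mathcal{P}$ would satisfy $q\le p_y=q_y$ by maximality and (c), contradicting $q\le 1-q_y$.

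The hard direction is (a)$\Rightarrow$(c). Given a canonical decomposition, $P_x:=p_{11}+p_{10}\in\{x,y\}'$ satisfies $P_xx$ having $\mathcal{P}$, so $P_x\le q_x\le p_x$. To show $P_x=p_x$, by uniqueness in Theorem~\ref{dec} it suffices to prove that $(1-P_x)x$ completely lacks $\mathcal{P}$ in $\{x\}'$. I would assume for contradiction a projection $0\ne q\in\{x\}'$ with $q\le 1-P_x$ and $qF_i(x)=0$, and form $f:=[qp_{01}]$. By Lemma~\ref{cpm}, $f\in\{x,x^*\}'$, and $f\le q$ gives $fF_i(x)=0$. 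The key identity $fp_{01}=qp_{01}$ (a consequence of $fq=f$) yields $(q-f)p_{01}=0$, whence $q-f\le(1-p_{01})\wedge(1-P_x)=p_{00}$; since $(q-f)x$ has $\mathcal{P}$, the completely-non-$\mathcal{P}$ hypothesis on $p_{00}x$ forces $q=f=[qp_{01}]$. A symmetric argument gives $q=[qp_{00}]$.

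The main obstacle is deducing $q=0$ from the identities $q=[qp_{01}]=[qp_{00}]$. I plan to pass to the right projections $h:=[p_{01}q]\le p_{01}$ and $h':=[p_{00}q]\le p_{00}$, which by Lemma~\ref{cpm} lie in $\{x,x^*\}'$ and are equivalent to $q$ via partial isometries built from $p_{01}q$ and $p_{00}q$ inside the Baer $*$-subring $\{x,x^*\}'$ furnished by Theorem~\ref{starcommutant}. The delicate step is transferring the property $\mathcal{P}$ across these equivalences, using Corollary~\ref{remcomm} to track how the left projections interact with $F_i(x)$; once established, $h$ becomes a nonzero projection beneath $p_{01}$ in $\{x\}'$ with $hx$ having $\mathcal{P}$, contradicting the completely-non-$\mathcal{P}$ assumption on $p_{01}x$. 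This transfer across equivalent projections — not automatic in a general Baer $*$-ring — is where I expect the principal technical difficulty.
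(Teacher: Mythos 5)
Your construction of $p_x,p_y$ (Theorem~\ref{dec} applied to $x$ and to $y$ separately) and of $q_x,q_y$ (Theorem~\ref{dec} applied to the pair with $\tilde F_i(x,y):=F_i(x)$, resp.\ $F_i(y)$, plus Proposition~\ref{manyprop}), the inequalities $q_x\le p_x$, $q_y\le p_y$ obtained by comparing the defining suprema, the equivalence of the second and third bullets, and the passage from $p_x=q_x,\ p_y=q_y$ to the existence of the canonical decomposition via $p_{11}=q_xq_y$, etc., all coincide with the paper's argument and are correct.

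The gap is in the remaining implication: from the existence of a canonical decomposition to $p_x=q_x$ (equivalently $p_x\in\{x,y\}'$). The paper disposes of this in one line by asserting $p_x=p_{11}+p_{10}$; you correctly see that the real content is to rule out a nonzero projection $q\in\{x\}'$ with $q\le p_{01}+p_{00}$ and $qF_i(x)=0$, and your reduction to $q=[qp_{01}]=[qp_{00}]$ (via $f:=[qp_{01}]$, $fp_{01}=qp_{01}$, $q-f\le p_{00}$, and the complete non-$\mathcal{P}$ of $p_{00}x$) is sound as far as it goes. But the decisive step is missing: to contradict the complete non-$\mathcal{P}$ of $p_{01}x$ you must produce a nonzero projection \emph{below} $p_{01}$, commuting with $x$, that still annihilates every $F_i(x)$. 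The natural candidate $[p_{01}q]$ satisfies $[p_{01}q]F_i(x)=0$ if and only if $qp_{01}F_i(x)=0$, and this does not follow from $qF_i(x)=0$ because $q$ and $p_{01}$ need not commute. The ``transfer across equivalent projections'' you invoke instead would require a partial isometry in $\{x,x^*\}'$ implementing the equivalence of $[qp_{01}]$ and $[p_{01}q]$ and commuting suitably with $F_i(x)$ --- a polar-decomposition-type property that a general Baer $*$-ring is not guaranteed to have. You flag this yourself as ``the principal technical difficulty'' and leave it unresolved, so this direction is not proved. (For what it is worth, the paper supplies no more detail here than the bare assertion; but as submitted, your argument does not close the implication.)
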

\begin{proof} In fact the first part has been explained before the proposition. Precisely, the existence of $p_x, p_y$ follows from Theorem \ref{dec} for $\{F_i\}_{i\in I}.$ Define $$F_{i1}(x,y):=F_i(x),\quad F_{i2}(x,y):=F_i(y)$$ for $i\in I$. Then Proposition \ref{manyprop} for $I_1=I\times \{1\}, I_2=I\times \{2\}$
yields the existence of $q_x, q_y$ which commute. To see that $q_x\le p_x$ recall from the proof of Theorem \ref{dec} that $$p_x=\sup\{q\in\tilde{R}\cap \{x\}':q<1-[F_i(x)]\}.$$
Note that $q_x\le 1-[F_{i1}(x,y)]=1-[F_i(x)]$ and obviously $q_x\in\tilde{R}\cap \{x\}'$. Hence, $q_x$ belongs to the set above, so $q_x\le p_x$.  Similarly $q_y\le p_y$.

For the second part, denote by $p_{11}+p_{10}+p_{01}+p_{00}=1$ the canonical decomposition of the pair $(x,y)$. If it exists, then $p_x=p_{11}+p_{10}, p_y=p_{11}+p_{01}$ so they commute with both $x,y$. If $p_x\in\{x,y\}'$ then, similarly as we showed $q_x\le p_x$ we may show the reverse inequality. If $p_x=q_x, p_y=q_y$ then $p_x, p_y$ commute. One can check that $p_{11}=p_xp_y,\, p_{10}=p_x(1-p_y),\, p_{01}=(1-p_x)p_y,\, p_{00}=(1-p_x)(1-p_y)$ is the canonical decomposition with respect to the property $\mathcal{P}.$
\end{proof}
Recall  that any pair of doubly commuting operators in $\B(H)$ admits a canonical decomposition with respect to any hereditary property \cite[Corollary 2.4]{CatPtakSzym}. Unfortunately, the proof is based on von Neumann algebras,  precisely Double Commutant Theorem. In the case of Baer $*$-rings such a result, if it is correct, requires a different proof. By Theorem \ref{starcommutant} if $p_x$  may be obtained as a supremum of projections commuting with $y$ then it commutes with $y$ as well (the notation as in Proposition \ref{canequiv}). Such a condition is used to show the existence of a canonical decomposition of a pair of doubly commuting isometries with respect to unitarity in \cite{Bagheri-Bardi2018}. However, it is not the only way. One can imagine a set of projections $p_i\in \{x\}'$ such that $p_iy=yp_{i+1}$. Then $p_i\notin \{y\}'$ but $\sup\{p_i\}\in\{y\}'$.

Recall that the decomposition of an isometry with respect to unitarity is called Wold decomposition and the corresponding canonical decomposition of a pair Wold-S\l oci\' nski decomposition. Such results in Baer $*$-rings are showed in \cite{Bagheri-Bardi2018}. Recall that an isometry $x$ is a unilateral shift if $[x^n(1-[x])]$ are pairwise orthogonal for $n\ge 0$ and $\sum_{n=0}^\infty[x^n(1-[x])]=1$.
\begin{theorem}[{\cite[Theorem 2.4]{Bagheri-Bardi2018}}] Let $x$ be an isometry in a Baer $*$-ring $R$. Then there is a unique projection $p_u\in \{x\}'$ such that,
\begin{itemize}
\item the compression $p_ux$ is unitary and,
\item the compression $(1-p_u)x$ is a unilateral shift.
\end{itemize}
\end{theorem}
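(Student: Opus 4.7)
The plan is to extract $p_u$ from Theorem \ref{C1}, identify it concretely as the infimum $p_\infty:=\inf_{n\ge 0}[x^n]$, and then read off the unilateral shift structure on $(1-p_u)x$ from this identification; uniqueness is then inherited from Theorem \ref{dec}.

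First I would record that, since $x^*x=1$, one has $[x^n]=x^nx^{*n}$ and $x^n(1-[x])$ is a partial isometry with
$$e_n:=[x^n(1-[x])]=x^n(1-[x])x^{*n}=[x^n]-[x^{n+1}].$$
Because $\{[x^n]\}_{n\ge 0}$ is a descending chain of projections, the $e_n$ are automatically pairwise orthogonal and telescope to $\sum_{n=0}^{N}e_n=1-[x^{N+1}]$, hence $\sum_{n=0}^\infty e_n=1-p_\infty$. Next I would apply Theorem \ref{C1} with $F_1(x,1)=1-x^*x$, $F_2(x,1)=1-xx^*$ to obtain the maximal projection $p_u\in\{x\}'$ making $p_ux$ unitary in $p_uRp_u$; since $x$ is already an isometry, $F_1$ is vacuous.

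The core of the argument is to show $p_u=p_\infty$. For this I would use the intertwining identity $x[x^n]=[x^{n+1}]x$ (and its adjoint form $x^*[x^n]=[x^{n-1}]x^*$ for $n\ge 1$), both immediate from $x^*x=1$. Since $p_\infty\le[x^n]$ for every $n$, one obtains $xp_\infty=x[x^n]p_\infty=[x^{n+1}]xp_\infty$, whence $[xp_\infty]\le[x^{n+1}]$ for every $n$, so $[xp_\infty]\le p_\infty$ and thus $p_\infty xp_\infty=xp_\infty$. The symmetric computation on $x^*$ followed by taking adjoints gives $p_\infty xp_\infty=p_\infty x$, so $p_\infty\in\{x\}'$. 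Isometry of $p_\infty x$ in $p_\infty Rp_\infty$ is immediate, and $p_\infty\le[x]=xx^*$ upgrades it to unitarity. Hence $p_\infty\le p_u$ by maximality. Conversely, unitarity of $p_ux$ in $p_uRp_u$ lifts to every power $(p_ux)^n=p_ux^n$, giving $p_ux^nx^{*n}=p_u$, i.e.\ $p_u\le[x^n]$ for every $n$, hence $p_u\le p_\infty$.

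Finally, with $p_u=p_\infty$ in hand, Corollary \ref{remcomm} yields $[(1-p_u)x^n(1-[x])]=(1-p_u)e_n$, and these projections sum in the corner $(1-p_u)R(1-p_u)$ to $(1-p_u)(1-p_\infty)=1-p_u$, which is exactly the unilateral shift condition for $(1-p_u)x$. Uniqueness of $p_u$ is inherited from Theorem \ref{dec}. The main obstacle is establishing $p_\infty\in\{x\}'$: in $\B(H)$ one would invoke reducing subspaces, but here it must be extracted purely algebraically from the intertwining identity $x[x^n]=[x^{n+1}]x$ together with the fact that the infimum of a descending family of projections absorbs any element that shifts through that family.
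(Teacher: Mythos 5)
Your proposal is correct, and it is substantially more self-contained than what the paper actually does for this statement. The paper gives no proof here: it states the theorem as a citation of Bagheri-Bardi, remarks that the existence (and uniqueness) of the unitary/completely-non-unitary splitting follows from Theorem \ref{C1} (i.e.\ from Theorem \ref{dec} with $F_1(x,1)=1-x^*x$, $F_2(x,1)=1-xx^*$), and defers the identification of the completely non-unitary part as a unilateral shift entirely to the cited reference. You instead identify $p_u$ concretely as $p_\infty=\inf_{n\ge 0}[x^n]$ and derive the shift structure by telescoping $[x^n]-[x^{n+1}]$; the key algebraic steps (the intertwining $x[x^n]=[x^{n+1}]x$ and its adjoint, which force $p_\infty\in\{x\}'$, and the two inequalities $p_\infty\le p_u$, $p_u\le p_\infty$) all check out, and Corollaries \ref{remcomm} and \ref{infsum} justify the passage to the corner $(1-p_u)R(1-p_u)$. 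What your route buys is exactly the content the paper outsources: an explicit, purely algebraic description of the non-unitary summand. One small point to tighten: the uniqueness asserted in the statement is uniqueness of a projection splitting $x$ into a unitary part and a \emph{unilateral shift}, whereas Theorem \ref{dec} gives uniqueness for the splitting into a unitary part and a \emph{completely non-unitary} part. To inherit uniqueness as you claim, you should add the one-line observation that a unilateral shift is completely non-unitary -- which falls out of your own identification, since the shift condition telescopes to $\inf_n[(1-q)x^n]=0$ in the corner, forcing $(1-q)p_\infty=0$ and hence $p_\infty\le q$, while your step showing $p_u\le[x^n]$ already gives $q\le p_\infty$ for any $q$ compressing $x$ to a unitary.
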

The existence of Wold decomposition  follows from Theorem \ref{C1}. The important advantage of the result in \cite{Bagheri-Bardi2018} is that a completely non unitary isometry is described as a unilateral shift. The generalization of Wold-S\l oci\'nski decomposition to Baer $*$-rings \cite[Theorem 3.2]{Bagheri-Bardi2018} is only a decomposition. The models of all summands in $\B(H)$ are known and can be generalized to Baer $*$-rings.
\begin{theorem}\label{WSdec}
Let $(x, y)$ be a pair of doubly commuting isometries in a Baer $*$-ring. There are unique $p_{uu},p_{us},p_{su},p_{ss}\in\{x,y\}'$ such that $$p_{uu}+p_{us}+p_{su}+p_{ss}=1$$
 and
\begin{itemize}
\item  $(p_{uu}x, p_{uu}y)$ is a pair of unitary elements,
\item  $p_{us}x$ is unitary  and $p_{us}y$ is a unilateral shift and $$p_{us}x=\sum_{i=0}^\infty p_{us}x[y^i(1-[y])],$$
\item  $p_{su}x$ is a unilateral shift and $p_{su}y$ is unitary and $$p_{su}y=\sum_{i=0}^\infty p_{su}y[x^i(1-[x])],$$
\item $(p_{ss}x, p_{ss}y)$ is a pair of unilateral shifts and $$p_{ss}=\sum_{m,n\ge 0}[p_{ss}x^my^n(1-[x])(1-[y])].$$
    \end{itemize}
    \end{theorem}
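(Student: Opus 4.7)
My plan is to invoke \cite[Theorem 3.2]{Bagheri-Bardi2018} (Wold--S\l oci\'nski for doubly commuting isometries in Baer $*$-rings) to obtain the four projections $p_{uu}, p_{us}, p_{su}, p_{ss}\in\{x,y\}'$ summing to $1$ and giving the asserted unitary/shift behaviour of $x, y$ in each corner (including uniqueness); what remains is to supply the two series formulas, which are the genuinely new content. Write $p_u^x=p_{uu}+p_{us}$ and $p_u^y=p_{uu}+p_{su}$ for the two Wold projections, so that $(1-p_u^x)x$ and $(1-p_u^y)y$ are the unilateral shift parts.

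For the formula for $p_{us}x$, note that $p_{us}y$ is a unilateral shift in $p_{us}Rp_{us}$. Since $p_{us}\in\{x,y\}'$ is selfadjoint, Corollary~\ref{remcomm} gives $p_{us}\in\{[y]\}'$ and $[p_{us}y^i(1-[y])]=p_{us}[y^i(1-[y])]$. Using $\sum_{i\ge 0}[y^i(1-[y])]=1-p_u^y$ and $p_{us}\le 1-p_u^y$, we obtain $p_{us}=p_{us}\sum_{i\ge 0}[y^i(1-[y])]$, and then the definition of the infinite sum against pairwise orthogonal projections gives
\[
p_{us}x=(p_{us}x)\cdot p_{us}=(p_{us}x)\sum_{i\ge 0}[y^i(1-[y])]=\sum_{i\ge 0}p_{us}x[y^i(1-[y])].
\]
The identity for $p_{su}y$ is symmetric.

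The crux is the identity for $p_{ss}$. Set $a_m=x^m(1-[x])$ and $b_n=y^n(1-[y])$; then $a_m^*a_m=1-[x]$ and one checks $a_ma_m^*a_m=a_m$, so $a_m$ is a partial isometry (and similarly for $b_n$). Double commutativity together with Lemma~\ref{cpm} forces $[x]$ to commute with $y, y^*$ and $[y]$ with $x, x^*$, so $a_m, a_m^*, [a_m]$ all commute with $b_n, b_n^*, [b_n]$; in particular $a_mb_n=x^my^n(1-[x])(1-[y])$. Then
\[
(a_mb_n)(a_mb_n)^*=a_m[b_n]a_m^*=[b_n]a_ma_m^*=[a_m][b_n],
\]
a projection commuting with $a_mb_n$, whence $a_mb_n$ is a partial isometry with $[a_mb_n]=[a_m][b_n]$. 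The relation $(1-[x])x=0$ forces $(1-[x])x^k(1-[x])=0$ for $k\ge 1$, so the $[a_m]$ are pairwise orthogonal; and because $(1-p_u^x)x$ is a unilateral shift one gets $\sum_{m\ge 0}[a_m]=1-p_u^x$, and symmetrically $\sum_{n\ge 0}[b_n]=1-p_u^y$. Since $[a_mb_n]=[a_m][b_n]\le(1-p_u^x)(1-p_u^y)=p_{ss}$ and $p_{ss}$ commutes with $x, y, [x], [y]$, Corollary~\ref{remcomm} identifies $[p_{ss}x^my^n(1-[x])(1-[y])]=p_{ss}[a_mb_n]=[a_mb_n]$, and Corollary~\ref{infsum}(3) and~(1) give
\[
\sum_{m,n\ge 0}[a_mb_n]=\Bigl(\sum_{m\ge 0}[a_m]\Bigr)\Bigl(\sum_{n\ge 0}[b_n]\Bigr)=(1-p_u^x)(1-p_u^y)=p_{ss}.
\]

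The main obstacle is the factorisation $[a_mb_n]=[a_m][b_n]$: this is where double commutativity is essential, since without it one cannot commute $[b_n]$ past $a_m^*$ in the computation of $(a_mb_n)(a_mb_n)^*$.
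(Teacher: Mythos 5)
Your proof is correct and takes essentially the same route as the paper: existence of the four projections from \cite[Theorem 3.2]{Bagheri-Bardi2018}, the formula for $p_{us}x$ from Corollary \ref{remcomm} together with the unilateral-shift property of $p_{us}y$, and the formula for $p_{ss}$ from the factorisation $[x^my^n(1-[x])(1-[y])]=[x^m(1-[x])][y^n(1-[y])]$, which you and the paper both obtain from $[z]=zz^*$ for partial isometries plus double commutativity (you compute $(a_mb_n)(a_mb_n)^*$ directly, the paper computes each factor as $[x^m]-[x^{m+1}]$ — a cosmetic difference), and then Corollary \ref{infsum}. The one caveat is that uniqueness is not actually supplied by the cited theorem (the paper points this out explicitly), so you should not attribute it there; it follows immediately from Theorem \ref{C1} and Corollary \ref{normalfamily}, since unitarity is a hereditary property and the corresponding maximal projections are unique.
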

    \begin{proof}
    The existence of the decomposition is shown in \cite[Theorem 3.2]{Bagheri-Bardi2018}. It was not emphasized that it is unique. However, since unitarity is a hereditary property it is unique. A precise proof is a consequence of uniqueness in Theorem \ref{dec}. Indeed, by Corollary \ref{normalfamily} there is $p_{uu}$ the maximal (so unique) projection compressing $(x,y)$ to unitary elements (normal isometries are unitary elements). On the other hand $p_{uu}+p_{us}$ is the maximal projections compressing $x$ to a unitary element which is also unique by Theorem \ref{C1}. Hence $p_{us}$ is unique as well as, by similar arguments, $p_{su}$. Consequently also $p_{ss}$ is unique.

    Since $x,y$ doubly commute, $x$ commute with $[y^n]$ for any $n$ as well as $y$ commute with $[x^m]$ for any $m\ge 0$ by Lemma \ref{cpm}.

    Let us describe compression $(p_{us}x, p_{us}y)$ (and similarly $(p_{su}x, p_{su}y)$).  By the above and since $x$ commute also with $p_{us}$ it doubly commutes with $p_{us}y^m(1-[y])$ and, by Lemma \ref{cpm}, with $[p_{us}y^m(1-[y])]$. Note that $(p_{us}y)^i(1-[p_{us}y])=p_{us}y^i(1-[y])$. Hence, and since $p_{us}y$ is a unilateral shift, $\sum_{i=0}^\infty [p_{us}y^i(1-[y])]=p_{us}$ (recall that unity in $p_{us}Rp_{us}$ is $p_{us}$). Since by Corollary \ref{remcomm} $[p_{us}y^i(1-[y])]=p_{us}[y^i(1-[y])]$ we get  $$p_{us}x=\sum_{i=0}^\infty p_{us}x[p_{us}y^i(1-[y])]=\sum_{i=0}^\infty p_{us}x[y^i(1-[y])]$$ which is a decomposition of $p_{us}x$.

    Let us show the last part. Since $p_{ss}x, p_{ss}y$ are unilateral shifts and by Corollary \ref{infsum}  \begin{align*}p_{ss}=p_{ss}^2&=\left(\sum_{m=0}^\infty [p_{ss}x^m(1-[x])]\right)\left(\sum_{n=0}^\infty [p_{ss}y^n(1-[y])]\right)\\&=\sum_{m,n\ge 0}[p_{ss}x^m(1-[x])][p_{ss}y^n(1-[y])].\end{align*} It has left to show that $$[p_{ss}x^m(1-[x])][p_{ss}y^n(1-[y])]=[p_{ss}x^my^n(1-[x])(1-[y])]$$ which by Corollary \ref{remcomm} is equivalent to
     $$p_{ss}[x^m(1-[x])][y^n(1-[y])]=p_{ss}[x^my^n(1-[x])(1-[y])].$$
  Note that $x^m(1-[x]), y^n(1-[y])$ are partial isometries. Moreover, as $(1-[x])(1-[y])$ is a projection, also $x^my^n(1-[x])(1-[y])$ is a partial isometry. Recall that for a partial isometry $z\in R$ there is $[z]=zz^*$. Hence one can check that
   \begin{align*}
   [x^m(1-[x])]&=[x^m]-[x^{m+1}],\\
   [y^n(1-[y])]&=[y^n]-[y^{n+1}],\\
   [x^my^n(1-[x])(1-[y])]&=([x^m]-[x^{m+1}])([y^n]-[y^{n+1}])\end{align*}
     which finishes the proof.
    \end{proof}

\bibliographystyle{abbrv}
\bibliography{baer}

\begin{thebibliography}{10}

\bibitem{BB2}
G.~A. Bagheri-Bardi, A.~Elyaspour, and G.~Esslamzadeh.
\newblock {The role of algebraic structure in the invariant subspace theory}.
\newblock {\em Linear Algebra Appl.}, aug 2019.

\bibitem{Bagheri-Bardi2018}
G.~A. Bagheri-Bardi, A.~Elyaspour, and G.~H. Esslamzadeh.
\newblock {Wold-type decompositions in Baer ⁎-rings}.
\newblock {\em Linear Algebra Appl.}, 539:117--133, 2018.

\bibitem{Berberian72}
S.~K. Berberian.
\newblock {\em {Baer *-Rings}}, volume 195 of {\em Grundlehren der
  mathematischen Wissenschaften}.
\newblock Springer, Berlin, Heidelberg, 1972.

\bibitem{BKPS2017}
Z.~Burdak, M.~Kosiek, P.~Pagacz, and M.~S{\l}oci{\'{n}}ski.
\newblock {On the commuting isometries}.
\newblock {\em Linear Algebra Appl.}, 516, 2017.

\bibitem{BKS2015}
Z.~Burdak, M.~Kosiek, and M.~S{\l}oci{\'{n}}ski.
\newblock {Compatible pairs of commuting isometries}.
\newblock {\em Linear Algebra Appl.}, 479, 2015.

\bibitem{CatPtakSzym}
X.~Catepillan, M.~Ptak, and W.~Szymanski.
\newblock {Multiple Canonical Decompositions of Families of Operators and a
  Model of Quasinormal Families}.
\newblock {\em Proc. Am. Math. Soc.}, 121(4):1165, 2006.

\bibitem{Halmos70}
P.~R. Halmos and L.~J. Wallen.
\newblock {Powers of Partial Isometries}.
\newblock {\em J. Math. Mech.}, 19:657--663, 1970.

\bibitem{Muller89}
K.~Hor\'ak and V.~M\"uller.
\newblock {Functional model for commuting isometries.}
\newblock {\em Czechoslov. Math. J.}, 39(2):370--379, 1989.

\bibitem{Kaplansky68}
I.~Kaplansky.
\newblock {\em {Rings of Operators}}.
\newblock Benjamin, New York, 1968.

\bibitem{Suciu68}
I.~Suciu.
\newblock {On the semi-groups of isometries}.
\newblock {\em Stud. Math.}, 30(1):101--110, 1968.

\end{thebibliography}
\end{document}